\newcommand{\refthm}[1]{Theorem~\ref{#1}}
\newcommand{\refcor}[1]{Corollary~\ref{#1}}
\newcommand{\reflem}[1]{Lemma~\ref{#1}}
\newcommand{\refprop}[1]{Propositon~\ref{#1}}
\newcommand{\refsec}[1]{Section~\ref{#1}}
\newcommand{\refex}[1]{Example~\ref{#1}}
\newcommand{\set}[2]{\left\{#1\mathrel{\left|\vphantom{#1}\vphantom{#2}\right.}#2\right\}}
\newcommand{\oneset}[1]{\left\{\mathinner{#1}\right\}}
\newcommand{\abs}[1]{\left|\mathinner{#1}\right|}
\newcommand{\Abs}[1]{\left\Vert\mathinner{#1}\right\Vert}
\newcommand{\floor}[1]{\left\lfloor\mathinner{#1} \right\rfloor}
\newcommand{\N}{\mathbb{N}}
\newcommand{\Z}{\mathbb{Z}}
\newcommand{\R}{\mathbb{R}}
\renewcommand{\phi}{\varphi}
\newcommand{\eps}{\varepsilon}
\newcommand{\alp}{\alpha}
\newcommand{\sig}{\sigma}
\newcommand{\Sig}{\Sigma}
\newcommand{\Gam}{\GG}
\newcommand\GG{\Gamma}
\newcommand{\Oh}{\mathcal{O}}
\newcommand\ra{\longrightarrow}
\newcommand\RAS[2]{\overset{#1}{\Longrightarrow}_{#2}}
\newcommand{\smalloverline}[1]
{{\mspace{1mu}\overline{\mspace{-1mu}#1\mspace{-1mu}}\mspace{1mu}}}
\newcommand{\ov}[1]{\smalloverline{#1}}
\newcommand{\oi}[1]{{#1}^{-1}}
\newcommand{\inv}[1]{\ov{#1}} 
\newcommand{\ol}[1]{\overline{#1}}
\newcommand{\wh}[1]{\widehat{#1}}
\newcommand\GL{\mathop\mathrm{GL}}
\newcommand{\FIM}{\ensuremath{\mathrm{FIM}}}
\newcommand\DG{\mathop\mathrm{DG}}
\newcommand\IRR{\mathop\mathrm{IRR}}
\newcommand{\IFF}{if and only if\xspace}
\newcommand{\Ct}{Coxeter-trace\xspace}
\newcommand{\raCG}{right-angled {C}oxeter group\xspace}
\newcommand{\mazu}{{M}azur\-kie\-wicz\xspace}
\newcommand\lds{,\ldots ,} 
\newcommand{\sse}{\subseteq}
\newcommand{\es}{\emptyset}
\newcommand{\sm}{\setminus}
\newcommand{\os}[1]{\oneset{#1}}
\newenvironment{vd}{\noindent\color{blue} VD }{}
\newenvironment{jk}{\noindent\color{red} JK }{}
\newenvironment{ml}{\noindent\color{magenta} ML }{}
\newenvironment{jl}{\noindent\color{blue} JL : }{}
\begin{document}
\title{Logspace computations  in {C}oxeter groups and graph
groups} 
\author{Volker Diekert\inst{1} \and 
Jonathan Kausch\inst{1} \and Markus Lohrey\inst{2}}
\institute{
 FMI, Universit\"at Stuttgart, Germany 
\and  Insitut f\"ur Informatik, Universit\"at Leipzig,  Germany} 

\maketitle

\begin{abstract}
Computing normal forms in groups (or monoids) is in general harder
than solving the word problem (equality testing). However, normal form
computation has a much wider range of applications.
It is therefore interesting to investigate 
the complexity of computing normal forms  for important classes of groups.

For Coxeter groups we show that the following algorithmic tasks can 
be solved by a deterministic Turing machine using logarithmic work space, only: 
1. Compute the length of any geodesic normal form. 
2. Compute the set of letters occurring in any geodesic normal form. 
3. Compute the Parikh-image of any geodesic normal form in case that all 
defining relations have even length (i.e., in even Coxeter groups.) 
4. For right-angled  Coxeter groups we can do actually compute the short length normal form in logspace. (Note that short length normal forms are geodesic.) 

Next, we apply the results to right-angled Artin groups. They are also known as 
free partially commutative groups or as graph groups. As a consequence of our result 
on right-angled  Coxeter groups we show that shortlex normal forms 
in  graph groups can be
computed in logspace, too. Graph groups play an important r{\^o}le 
in group theory, and they 
have a close connection to concurrency theory. 
As an application of our results we show that the word problem for free
partially commutative inverse monoids is in logspace. This result generalizes 
a result of Ondrusch and the third author on free inverse monoids.
Concurrent systems which are deterministic and co-deterministic
can be studied via inverse monoids. 

%


\end{abstract}

\section{Introduction}
The study of group theoretical decision problems,
like the word problem (Is a given word equal to $1$ in the group?),
the conjugacy problem (Are two given words conjugated in the group?), 
and the isomorphism problem (Do two given group presentations yield
isomorphic groups?), is a classical topic 
in combinatorial group theory with a long history dating back to the beginning 
of the 20th century, see the survey \cite{Miller} for more 
details.

With the emergence of computational complexity theory, the complexity 
of these decision problems in various classes of groups has developed into 
an active research area, where algebraic methods as 
well as computer science techniques  complement one another in a
fruitful way.

In this paper we are interested in group theoretical  problems
which can be solved efficiently in parallel. 
More precisely, we are interested in {\em deterministic logspace}, called simply 
{\em  logspace} in the following. Note that logspace is at a lower level in the 
$\mathsf{NC}$-hierarchy of  parallel complexity classes:\footnote{$\mathsf{NC}^i$
denotes the class of languages that can be accepted by (uniform) boolean circuits
of polynomial size and depth $O(\log^i(n))$, where all gates have fan-in $\leq 2$,
see \cite{Vollmer99} for more details. We will not use the $\mathsf{NC}$-hierarchy in the rest
of this paper.}
$$
\mathsf{NC}^1 \subseteq \mathsf{LOGSPACE} \subseteq \mathsf{NC}^2 \subseteq \mathsf{NC}^3
\subseteq \cdots \subseteq \mathsf{NC} = \bigcup_{i \geq 1} \mathsf{NC}^i \subseteq \mathsf{P}
$$
It is a standard conjecture in complexity theory  that $\mathsf{NC}$ is strictly contained in $\mathsf{P}$.

A fundamental result
in this context 
was  shown 
in \cite{lz77,Sim79}: The word problem of finitely generated linear groups 
belongs to logspace. In \cite{lz77}, Lipton and Zalcstein proved
this result for fields of characteristic $0$. The case of a
field of prime characteristic was considered in \cite{Sim79} by Simon.
The class of groups with a word problem in
logspace is further investigated in \cite{Waack81}.   
Another important result is Cai's $\mathsf{NC}^2$ algorithm
for the word problem of a hyperbolic group \cite{cai92stoc}. 
In \cite{Lo05ijfcs} this result was improved to {\sf LOGCFL}, which is
the class of all languages that are logspace-reducible to a
context-free language. {\sf LOGCFL} is a subclass of $\mathsf{NC}^2$ and hence
in the intersection of the class  of problems which can be decided
in polynomial time and the class of problems which can be decided in space
$\log^2(n)$. As a parallel complexity class {\sf LOGCFL} coincides with the (uniform) class 
${\sf SAC}^1$.

Often, it is not enough to solve the word problem, but one has to compute a normal form for 
a given group element. 
Fix a finite generating set $\Gamma$ (w.l.o.g. closed under inverses) 
for the group $G$. Then, a {\em geodesic} for $g \in G$ is a shortest word
over $\Gamma$ that represents $g$. By choosing the lexicographical smallest (w.r.t. a fixed
ordering on $\Gamma$) word among all geodesics for $g$, one obtains the 
{\em shortlex normal form} of $g$. The problem of computing geodesics and 
various related problems were studied in 
\cite{DLS,elder2010,ElRe2010,MyRoUsVe08,PRaz}. It turned out that there are groups with 
an easy word problem (in logspace), but where simple questions related to 
geodesics are computationally hard. For instance, every  metabelian
group embeds (effectively)
into a direct product of linear groups; hence its word problem can be solved in logspace.
On the other hand, it is shown in \cite{DLS}, that the question whether a given element $x$
of the wreath product $\Z/ 2 \Z \wr (\mathbb{Z} \times \mathbb{Z})$ (a metabelian group)
has geodesic length at most $n$ is {\sf NP}-complete.
A corresponding result was shown in \cite{MyRoUsVe08} for the free metabelian group of rank 2.
Clearly, these results show that in general one cannot compute shortlex
normal forms in metabelian groups in polynomial time (unless $\mathsf{P} = \mathsf{NP}$).
On the positive side, for {\em shortlex automatic groups} \cite{ech92} (i.e., automatic groups, where
the underlying regular set of representatives is the set of shortlex normal forms) 
shortlex normal forms can be computed in quadratic time. 
Examples of shortlex automatic groups are Coxeter groups, Artin groups
of large type, and hyperbolic groups. So for all these classes, 
shortlex normal forms can be computed in quadratic time.
In \cite{MyRoUsVe08}, it is also noted
that geodesics in nilpotent groups (which are in general not
automatic) can be computed in polynomial time.

In this paper, we deal with the problem of 
computing geodesics and shortlex normal forms in logspace.
A function can be computed in logspace, if it can be computed
by a deterministic \emph{logspace transducer}. The latter
is a Turing machine with three tapes: (i) a read-only input 
tape, (ii) a read/write work tape of length 
$\Oh(\log n)$, and
(iii) a write-only output tape. The output is written sequentially
from left to right onto the output tape.
Every  logspace transducer can be transformed into an equivalent 
deterministic polynomial time machine. Still better, it can be 
simulated by a Boolean circuit of polynomial size and $\Oh(\log^2 n)$
depth. Although it is not completely obvious,  the class of logspace computable functions
is closed under composition.  (See e.g. 
the textbook \cite{Papa} for these facts.)

Recently, the class of groups, where geodesics and shortlex normal forms 
can be computed in logspace, attracted attention, see \cite{ElderElstonOstheimer11},
where it was noted among other results that shortlex normal forms in free groups
can be computed in logspace. (Implicitly, this result was also shown in  \cite{LohOnd07}.)
In this paper, we deal with the problem of computing shortlex normal
forms for Coxeter groups. Coxeter groups
are discrete reflection groups and play an important role in many
parts of mathematics, see \cite{bjofra05,davis08}.
Every Coxeter group is linear and therefore has a logspace
word problem  \cite{bjofra05,davis08}. Moreover, as mentioned above,
Coxeter groups are shortlex automatic
\cite{BrinkHowlett93,Casselman94}. Therefore shortlex normal forms
can be computed in quadratic time.
However, no efficient parallel algorithms are known so far. 
In particular, it is open whether 
shortlex normal forms in Coxeter groups  can be computed in logspace.
We do not solve this problem in this paper, but we are able to compute in logspace
some important invariants of geodesics. 
More precisely, we are able to compute in logspace 
(i) the length of the  shortlex normal form of a given element 
(\refthm{thm-geodesic-length})  and (ii) the alphabet of symbols that appear in the shortlex normal form
of a given element (\refthm{thm:allcoxoderwas}).
The proofs for these results combine non-trivial results for 
Coxeter groups with some advanced tools from computational algebra. 
More precisely, we use the
following results:
\begin{itemize}
\item 
The Chinese remainder representation of a given number $m$ (which is 
the tuple of remainders $m \text{ mod } p_i$ for the first $k$ primes $p_1, \ldots,
p_k$, where $m< p_1 p_2 \cdots p_k$)
can be transformed in logspace into the binary representation of $m$ 
\cite{ChDaLi01,HeAlBa02}. This result is the key for proving that iterated
multiplication and division can be computed in logspace.
\item
Arbitrary algebraic constants can be approximated in logspace
up to polynomially many bits. This result was recently shown in 
\cite{DaPra11,Jer11}.
\end{itemize}
For the case of even Coxeter groups, i.e., Coxeter groups 
where all defining relations have even length, we can combine 
\refthm{thm-geodesic-length} and \refthm{thm:allcoxoderwas} in
one more general result, saying that the Parikh-image of the shortlex
normal form can be computed in logspace (\refthm{thm-parikh}).
The Parikh-image of a word $w \in \Sigma^*$ is the image of $w$
under the canonical homomorphism from $\Sigma^*$ to $\N^{|\Sigma|}$.

As mentioned above, it remains open, whether shortlex normal forms
in Coxeter groups can be computed in logspace. 
In the second part of this paper, we prove that for the 
important subclass of {\em right-angled Coxeter groups}
shortlex normal forms can be computed in logspace (\refthm{thm:lexshort}). 
A right-angled Coxeter group is defined by a finite
undirected graph $(\Sigma, I)$ by taking $\Sigma$ as the set of group generators 
and adding the defining relations $a^2=1$ for all $a \in \Sigma$ and 
$ab=ba$ for all edges $(a,b) \in I$.
We use techniques from the theory of Mazurkiewicz traces \cite{die90}. More precisely, we
describe right-angled Coxeter groups by strongly confluent length-reducing 
trace rewriting systems.  Moreover, using the geometric representation
of right-angled  Coxeter groups, we provide an elementary proof that
the alphabet of symbols that appear in a geodesic for $g$ can be computed in logspace
from $g$ (\refcor{cor:nixoderwas}).\footnote{In contrast, the proof of 
\refthm{thm:allcoxoderwas}, which generalizes \refcor{cor:nixoderwas}
to all Coxeter groups, is more difficult in the sense that it uses
geometry and more facts {}from  \cite{bjofra05}.} In contrast
to general Coxeter groups, for right-angled
Coxeter groups this alphabetic information suffices in order to
compute shortlex normal forms in logspace. 

Right-angled Coxeter groups are tightly related to graph groups,
which are also known as \emph{free partially commutative groups} or as 
\emph{right-angled Artin groups}. A graph group 
is defined by a finite undirected graph $(\Sigma, I)$ by taking $\Sigma$ as the set of group generators 
and adding the defining relations $ab=ba$ for all edges $(a,b) \in I$.
Hence, a right-angled Coxeter group is obtained from a graph group
by adding all relations $a^2=1$ for all generators $a$.
Graph groups received in recent years a lot of attention in group
theory because of their rich subgroup structure \cite{BesBr97,CrWi04,GhPe07}. On the algorithmic 
side, (un)decidability results were obtained for many important
group-theoretic decision problems in graph groups 
\cite{CrGoWi09,dm06}. There is a standard embedding
of a graph  group into a right-angled Coxeter group \cite{hw99}. Hence,
also graph  groups are linear and have logspace word problems. 
Using the special properties of this embedding, we can show that
also for graph groups, shortlex normal forms can be computed in
logspace (\refthm{thm:lexshort}). We remark that this is 
an optimal result in the sense that logspace is the smallest known 
complexity class for the word problem in free groups already.
Clearly, computing shortlex normal forms is at least as difficult
than solving the word problem.

Finally, we apply  \refthm{thm:lexshort} to {\em free partially commutative inverse monoids}. 
These monoids arise naturally in the context of deterministic 
and co-determin\-istic concurrent systems. This includes many real systems, 
because they can be viewed as  deterministic concurrent systems with \emph{undo}-operations.
In \cite{DiekertLohreyMiller08} it was shown that the word problem for a 
free partially commutative inverse monoid can be solved in time ${\cal O}(n \log(n))$. 
(Decidability of the word problem is due to Da Costa \cite{vel03}.) 
Using our logspace algorithm for computing shortlex normal forms in a
graph group, we can show that the word problem for a free partially commutative inverse monoid
can be solved in logspace (\refthm{space}). Again, with state-of-the
art techniques, this can be viewed as an optimal result. It also
generalizes a corresponding result for free inverse monoids 
{}from \cite{LohOnd07}. 
Let us emphasize that in order to obtain \refthm{space} we have to be 
able to compute shortlex normal forms in  graph groups in logspace;
knowing only that the word problem is in logspace would not have been sufficient
for our purposes.

Let us remark that for all our results it is crucial that the
group (resp., the free partially commutative inverse monoids) is fixed
and not part of the input. For instance, it is not clear whether for 
a given undirected graph $(\Sigma,I)$ and a word $w$ over $\Sigma \cup
\Sigma^{-1}$ one can check in logspace whether $w = 1$ in the graph
group defined by the graph $(\Sigma,I)$.

The work on this paper started at the AMS Sectional Meeting, Las Vegas, May 2011, and 
was motivated by the lecture of Gretchen Ostheimer \cite{ElderElstonOstheimer11}. A preliminary version of our results appeared as a conference abstract at
the Latin American Symposium on Theoretical Informatics 
(LATIN 2012), \cite{dkl12latin}. In contrast to the conference  abstract this paper
provides full proofs and it contains new material about even Coxeter groups and 
how to compute geodesic lengths in all Coxeter groups. 

\section{Notation}
Throughout $\Sig$  (resp.{} $\Gam$) denotes a finite \emph{alphabet}. This is a finite set, sometimes 
equipped with a linear order. An element of $\Sig$ is called a \emph{letter}. By $\Sig^*$ 
we denote the free monoid over $\Sig$. 
For a word $w\in \Sig^*$ we denote by $\alp(w)$ the {\em alphabet of $w$}: 
it is the set of letters occurring in $w$. 
With $|w|$ we denote the length of $w$. The {\em empty word } has length $0$; 
and it is denoted by 1 as other neutral elements in monoids or groups. 

All groups and monoids $M$ in this paper are assumed to be finitely generated;
and they come with a monoid homomorphism
$\pi: \Sig^* \to M$. Frequently we  assume that $M$ comes with  an involution\footnote{An {\em involution} on a set 
$\Gamma$ is a permutation $a \mapsto \ov a$ such that $\ov{\ov a} = a$. An involution of a monoid satisfies in addition 
$\ov {xy}= \ov y\; \ov x$.} $x \mapsto \ov x$ on $M$, and then we require that  $\pi(\Sig) \cup \ov{\pi( \Sig)}$ generates $M$ as a monoid. 

If the monoid $M$ is a group $G$, then  the involution is always
given by taking inverses, thus $\ov x = \oi x$. Moreover, 
$G$ becomes a factor group of the \emph{free group} $F(\Sig)$ thanks to
$\pi: \Sig^* \to G$.

Let $\ov \Sig$ be a disjoint copy of $\Sig$ and $\Gam = \Sig \cup 
\ov \Sig$. There is a unique extension of the natural mapping 
$\Sig \to \ov \Sig: \, a \mapsto \ov a$
such that $\Gam^*$ becomes a monoid with involution: We let  $\ol{\ol a} = a$ 
and $\ov{a_1 \cdots a_n} = \ov {a_n} \cdots \ov {a_1}$.  Hence, we can
lift our homomorphism $\pi: \Sig^* \to M$ 
to a surjective monoid homomorphism $\pi: \Gam^* \to M$ which respects 
the involution, i.e., $\pi(\ov{x}) = x^{-1}$. 

Given a surjective homomorphism $\pi: \Gam^* \to M$ and a linear order on  $\Gam$ we can define 
the geodesic length and the shortlex normal form for elements in $M$ as follows. 
For $w \in M$, the \emph{geodesic length} $\Abs{w}$ is the length of a shortest 
word in $\pi^{-1}(w)$. The \emph{shortlex normal form} of $w$ is the 
lexicographical first word in the finite set 
$\set{u \in \pi^{-1}(w)}{\abs{u}= \Abs{w}}$. By a \emph{geodesic} 
we mean any word in the finite set $\set{u \in \pi^{-1}(w)}{\abs{u}= \Abs{w}}$.


\section{Coxeter groups}\label{sec:cox}
A {\em Coxeter group} $G$ is given 
by a generating set $\Sig = \os{a_1 \lds a_n}$ of $n$ generators 
and a symmetric  $n \times n $ matrix $M = (m_{i,j})_{1 \leq i , j
  \leq n}$ over $\N$ 
such that $m_{i,j} = 1 \iff i = j$. 
The defining relations are 
$(a_i a_j)^{m_{i,j}} = 1$ for $1 \leq i,j \leq n$. 
In particular, $a_i^2 = 1$ for $1 \leq i \leq n$. 
Traditionally, one writes the entry $\infty$ instead $0$  in the Coxeter matrix $M$ 
and then $m_{i,j}$ becomes the order of the element $a_ia_j$.

A Coxeter group is called \emph{even}, if all $m_{i,j}$ are even numbers for $i \neq j$. 
It is called \emph{right-angled}, if $m_{i,j} \in \{0,1,2\}$ for all $i,j$. 
The defining relations of a right-angled Coxeter group can be rewritten in the 
following form: $a_i^2 = 1$ for $1 \leq i \leq n$ and $a_i a_j = a_j a_i$ for
$(i,j)\in I$ where $I$ denotes a symmetric and irreflexive relation 
$I \sse \{1,\ldots ,n\} \times \{1,\ldots ,n\}$. Thus, one could say that 
a right-angled Coxeter group is a \emph{free partially commutative} 
Coxeter group. Readers interested only in right-angled Coxeter groups or in the application to graph groups (i.e., 
right-angled Artin groups) may proceed directly to  \refsec{mazu}.

\subsection{Computing the geodesic alphabet and geodesic length}\label{sec:allcox}
Throughout this subsection $G$ denotes a Coxeter group given by a fixed  $n \times n$ 
Matrix $M$ as above. 
One can show that if $u$ and $v$ are geodesics with $u = v$ in $G$
then $\alpha(u) = \alpha(v)$ \cite[Cor.~1.4.8]{bjofra05}. (Recall that 
$\alpha(x)$ denotes the alphabet of the word $x$.) 
We will show how to compute this
alphabet in logspace. Moreover, we will show that also the geodesic length
$\abs{w}$ for a given $w \in G$ can be computed in logspace.

Let $\R^{\Sig}$ be the $n$ dimensional real vector space where
the letter $a$ is identified with the $a$-th unit vector. 
Thus, vectors will be written 
as formal sums $\sum_{b\in \Sig} \lambda_b b$
with real coefficients $\lambda_b\in \R$.
We fix the standard geometric representation $\sig: G \to \mathrm{GL}(n,\R)$,
where we write $\sig_w$ for the mapping $\sig(w)$,
see e.g. \cite[Sect.~4.2]{bjofra05}:
\begin{equation} \label{eq-geo-repr}
\sigma_{a_i}({a_j}) = 
\begin{cases}
{a_j} + 2 \cos(\pi/m_{i,j}) \cdot {a_i}  & \text{ if } m_{i,j} \neq 0 \\
{a_j} + 2  \cdot {a_i}  & \text{ if } m_{i,j} = 0  
\end{cases}
\end{equation}
Note that for $a \in \Sigma$, $\sig_w(a)$ cannot be the zero vector, since $\sig_w$ is invertible.
We write $\sum_{b\in \Sig} \lambda_b b \geq 0$ 
if $\lambda_b \geq 0$ for all $b\in \Sig$. 
The following fundamental lemma can be found in \cite[Prop. 4.2.5]{bjofra05}:

\begin{lemma}\label{lem:geqzero}
Let $w \in G$ and  $a\in \Sig$. 
We have 
$$
\Abs{wa} = \begin{cases}
 \Abs{w}+1 & \text{ if }  \sig_w(a) \geq 0 \\
\Abs{w}-1 & \text{ if }   \sig_w(a) \leq 0
\end{cases}
$$
\end{lemma}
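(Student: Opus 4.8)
The plan is to follow the standard geometric argument for Coxeter groups, using induction on $\Abs{w}$ together with the classical fact that $\sig_w$ maps the fundamental chamber into the region where a coordinate is determined by a reduced word. First I would recall the key structural input from \cite[Sect.~4.2]{bjofra05}: for a reduced word $w = a_{i_1}\cdots a_{i_k}$ representing an element of $G$, and any generator $a \in \Sig$, the vector $\sig_w(a)$ is either a nonnegative combination of the basis vectors or a nonpositive one — i.e. the coordinates of $\sig_w(a)$ all share one sign. This dichotomy is exactly what the lemma needs, so the main work is to connect it to the length change when appending $a$.

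The argument I would give proceeds as follows. Fix $w \in G$ and $a \in \Sig$. There are two cases. If $\Abs{wa} = \Abs{w} + 1$, take a reduced word $u$ for $w$; then $ua$ is reduced, and by the cited sign dichotomy applied to the reduced word $ua$ — or more directly, by the standard characterization that $\Abs{wa} > \Abs{w}$ iff $\sig_{w}(a)$ lies in the positive cone — we get $\sig_w(a) \geq 0$. Conversely, if $\Abs{wa} = \Abs{w} - 1$, then writing $w = w' a$ with $w'$ reduced of length $\Abs{w} - 1$ (using the exchange/deletion condition), we have $\sig_w(a) = \sig_{w'}\sig_a(a) = \sig_{w'}(-a) = -\sig_{w'}(a)$, and since $w'a$... wait, here $\Abs{w'a} = \Abs{w} > \Abs{w'}$ so $\sig_{w'}(a) \geq 0$, hence $\sig_w(a) \leq 0$. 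Since $\Abs{wa} \in \{\Abs{w}-1, \Abs{w}+1\}$ always (appending one generator changes the length by exactly one, as $a^2 = 1$), these two cases are exhaustive, and they produce exactly the claimed equivalences. One must also note the overlap case: if $\sig_w(a)$ were both $\geq 0$ and $\leq 0$ it would be the zero vector, which is impossible since $\sig_w$ is invertible (as remarked in the excerpt right before the lemma); so the two cases are genuinely disjoint and the lemma is well-posed.

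The step I expect to be the main obstacle is justifying the precise link between the sign of $\sig_w(a)$ and the direction of the length change, i.e. proving that $\Abs{wa} > \Abs{w}$ forces $\sig_w(a) \geq 0$ rather than merely that \emph{some} reduced expression interacts nicely. This is where one genuinely invokes the theory of the geometric representation: the clean way is to use that for $w \in G$ and a simple root $\alpha_a$, one has $\Abs{wa} > \Abs{w}$ if and only if $w(\alpha_a)$ is a positive root, which is \cite[Prop.~4.2.5]{bjofra05} itself (the lemma is essentially a restatement of it in the author's notation). So in practice the proof is short: translate the standard positive-root criterion into the notation $\sig_w(a) \geq 0$ versus $\sig_w(a) \leq 0$, and observe that invertibility of $\sig_w$ rules out the degenerate intersection of the two cones. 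I would present it in that compressed form, citing \cite[Prop.~4.2.5]{bjofra05} for the root-positivity criterion and deriving the two displayed cases from it.
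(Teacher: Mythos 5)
Your proposal is correct and matches the paper's treatment: the paper gives no independent proof of \reflem{lem:geqzero} but simply cites \cite[Prop.~4.2.5]{bjofra05}, which is exactly the positive-root criterion you reduce the statement to. Your added remarks (length changes by exactly $\pm 1$ since $a^2=1$, and invertibility of $\sig_w$ ruling out the zero vector so the two cases are disjoint) are sound and consistent with the remark preceding the lemma in the paper.
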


\begin{lemma} \label{lemma-main}
For a given $w \in G$ and  $a,b\in \Sig$, one can check in logspace, whether 
$\lambda_b \geq 0$, where $\sum_{b\in \Sig} \lambda_b b = \sigma_w(a)$.
\end{lemma}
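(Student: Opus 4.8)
The plan is to reduce the sign question for the single coordinate $\lambda_b$ of $\sigma_w(a)$ to an iterated product of fixed $n\times n$ matrices, followed by a logspace test of whether a fixed real algebraic number is $\geq 0$. Write $w = c_1 c_2 \cdots c_\ell$ with each $c_i \in \Sigma$ (the input is a word over $\Gamma$, but since every generator is an involution in $G$ we may treat $\bar a$ as $a$, a logspace-computable rewriting). Then $\sigma_w = \sigma_{c_1} \circ \cdots \circ \sigma_{c_\ell}$, and $\sigma_w(a)$ is obtained by multiplying the fixed matrices $\sigma_{c_i} \in \mathrm{GL}(n,\R)$ — whose entries are the constants $2\cos(\pi/m_{i,j})$ (or $2$) from \refeq{eq-geo-repr} — in order, and applying the product to the unit vector $a$. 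The coefficient $\lambda_b$ is then a single entry of this matrix-vector product.

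First I would observe that the entries of each $\sigma_{c_i}$ lie in the fixed real field $K = \Q(\cos(\pi/m_{i,j}) : i,j)$, which depends only on the fixed Coxeter matrix $M$, not on the input. Fix a primitive element $\theta$ for $K$ over $\Q$, so every entry of every $\sigma_{c_i}$ is a fixed $\Q$-linear combination of $1, \theta, \ldots, \theta^{d-1}$ where $d = [K:\Q]$. Carrying out the iterated matrix product over $K$ amounts to iterated multiplication of $dn \times dn$ matrices over $\Q$ (encoding multiplication by $\theta$ via the companion matrix of its minimal polynomial), and iterated multiplication of a fixed number of integer matrices of polynomially bounded dimension — after clearing denominators — is in logspace: this is exactly the iterated-integer-product result that follows from the Chinese-remainder-to-binary conversion of \cite{ChDaLi01,HeAlBa02} cited in the introduction. (Entries grow at most exponentially in $\ell$, hence have polynomially many bits; a fixed bound on denominators clears to integers.) Thus in logspace we obtain the exact value of $\lambda_b$ as a rational combination $\lambda_b = \sum_{k=0}^{d-1} q_k \theta^k$ with the $q_k \in \Q$ given in binary.

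It then remains to decide whether this concrete algebraic number $\beta = \sum_k q_k \theta^k$ is $\geq 0$. Here I would invoke the second cited tool: arbitrary (fixed) algebraic constants, in particular $\theta$, can be approximated in logspace to polynomially many bits \cite{DaPra11,Jer11}. Since $\beta$ is a $\Q$-combination with polynomially-many-bit coefficients of fixed powers of $\theta$, we can likewise approximate $\beta$ to within $2^{-p(\ell)}$ in logspace. The only subtlety is separating $\beta$ from $0$: if $\beta \neq 0$ it is a nonzero element of a number field of bounded degree whose "size" (height) is at most exponential in $\ell$, so it is bounded away from $0$ by $2^{-O(\ell)}$ — a standard root-separation/Liouville-type bound — and comparing the approximation against this threshold settles the sign; if the approximation is within the threshold of $0$, then $\beta = 0$ and we output "$\lambda_b \geq 0$".

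The main obstacle I anticipate is precisely this last step: organizing the computation so that the algebraic-approximation black box of \cite{DaPra11,Jer11} applies cleanly — i.e. reducing "$\beta \geq 0$ for the input-dependent $\beta$" to finitely many fixed algebraic constants plus logspace rational arithmetic — and pinning down the explicit separation bound that guarantees the approximation precision needed is only polynomial in the input length. Everything else (handling the involution, writing $\sigma_w$ as an iterated product, simulating $K$-arithmetic by bounded-dimension integer-matrix arithmetic, invoking iterated integer multiplication) is routine bookkeeping over logspace-closed-under-composition.
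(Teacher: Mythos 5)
Your argument is correct and its skeleton is the same as the paper's: write $\sigma_w(a)$ as an iterated product of $|w|$ fixed matrices over a fixed algebraic number ring, compute that product exactly in logspace by doing constant-dimension arithmetic modulo polynomially many small primes and converting back to binary via \refthm{theorem hesse und co}, and then decide the sign of the resulting entry by approximating finitely many fixed algebraic constants in logspace (\refthm{thm-approx-alg}) to a precision dictated by an exponential gap bound for nonzero values. The differences are in implementation. The paper writes $2\cos(\pi/m_{i,j}) = \zeta^k + \zeta^{2m-k}$ for a primitive $(2m)$-th root of unity $\zeta$ and computes in $\Z[X]/(X^{2m}-1)$, so the needed separation is exactly Litow's gap theorem for sums of roots of unity (\refthm{thm:canny}) and the constants to approximate are the numbers $\cos(j\pi/m)$; you instead take a primitive element $\theta$ of the real field $\Q\bigl(\cos(\pi/m_{i,j})\bigr)$, simulate field arithmetic by companion matrices over $\Q$, and replace the gap theorem by a norm/Liouville-type bound: a nonzero $\beta$ of fixed degree whose (cleared) coefficients have polynomially many bits satisfies $|\beta| \geq 2^{-O(\mathrm{poly}(|w|))}$. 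That bound is indeed standard for fixed degree (all conjugates of $\beta$ are at most $2^{O(\mathrm{poly}(|w|))}$ in absolute value and the norm is a nonzero rational with controlled denominator), so your route is a legitimate substitute for \refthm{thm:canny}, but you should spell it out rather than gesture at it, since this separation is precisely the point where the paper needs a citation. Two small corrections to the write-up: what you need (and, since $G$ is fixed, what you actually have) is an iterated product of polynomially many matrices of \emph{constant} dimension $dn$, not ``a fixed number of integer matrices of polynomially bounded dimension'' --- for matrices of polynomial dimension the iterated integer product is not known to be in logspace; and the entries are bounded by $2^{O(|w|)}$, i.e.\ have $O(|w|)$ bits, which is what keeps the required approximation precision polynomial.
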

In order to prove \reflem{lemma-main}, we need several tools.
Let $p_i$ denote the $i^{\text{th}}$ prime number.
It is well-known from number theory that the $i^{\text{th}}$ prime 
requires $O(\log(i))$ bits in its binary representation.
For a number $0 \leq M < \prod_{i=1}^m p_i$ we define the 
{\em Chinese remainder representation} $\mathsf{CRR}_m(M)$ as the 
$m$-tuple 
$$
\mathsf{CRR}_m(M) = (M \text{ mod } p_i)_{1 \leq i \leq m} .
$$
By the Chinese remainder theorem, the mapping 
$M \mapsto \mathsf{CRR}_m(M)$ is a bijection from 
the interval $[0, \prod_{i=1}^m p_i-1]$
to $\prod_{i=1}^m [0,p_i-1]$. 
By the following theorem, we can transform a {\sf CRR}-representation very
efficiently into binary representation.

\begin{theorem}[{\cite[Thm.~3.3]{ChDaLi01}}] \label{theorem hesse und co}
For a given tuple $(r_1,\ldots, r_m) \in \prod_{i=1}^m [0,p_i-1]$, we can compute
in logspace the binary representation of the unique number $M \in [0, \prod_{i=1}^m p_i-1]$
such that $\mathsf{CRR}_m(M) = (r_1, \ldots, r_m)$.
\end{theorem}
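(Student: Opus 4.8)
\noindent\textit{Proof idea.} The plan is to reconstruct $M$ via the Chinese Remainder Theorem and to argue that every ingredient of the reconstruction, except one genuinely hard step, involves only $O(\log m)$-bit intermediate data. Write $P = \prod_{i=1}^m p_i$ and $Q_i = P/p_i = \prod_{j \neq i} p_j$. Since $\gcd(Q_i, p_i) = 1$, the residue $Q_i \bmod p_i$ has an inverse $u_i$ with $0 \le u_i < p_i$, and setting $y_i = (r_i u_i) \bmod p_i$ one checks that $\sum_{i=1}^m y_i Q_i \equiv r_j \pmod{p_j}$ for every $j$. As $0 \le y_i Q_i < P$, the sum lies in $[0, mP)$, so
\[
M = \sum_{i=1}^m y_i Q_i - \alpha P, \qquad \alpha = \floor{\sum_{i=1}^m \frac{y_i}{p_i}} \in \{0, \ldots, m-1\}.
\]
This is the identity I would evaluate in logspace, writing the binary digits of $M$ from the low end onto the output tape.

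First I would dispose of the data that stays small. Each prime $p_i$ has $O(\log m)$ bits and can be regenerated on demand by enumerating integers and testing primality by trial division, using only a counter. The residue $Q_i \bmod p_i = \prod_{j \neq i}(p_j \bmod p_i) \bmod p_i$ is an iterated product \emph{modulo the small number} $p_i$; I would compute it by a single left-to-right pass maintaining a running residue of $O(\log m)$ bits, and then obtain $u_i$ from the extended Euclidean algorithm on numbers below $p_i$. Hence each $y_i$ is computable in logspace. The integer $\alpha$, and more generally arbitrarily many high-order bits of the rational $\theta = \sum_{i=1}^m y_i/p_i = M/P$, are also accessible: the $\ell$-th binary digit of a single unit fraction $y_i/p_i$ is determined by $2^\ell y_i \bmod p_i$, which one gets by repeated squaring of $2$ modulo the small prime $p_i$, and summing the $m$ resulting eventually-periodic bit-streams is an instance of iterated addition, which lies in logspace.

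The hard part will be the remaining step: turning the identity above into the binary representation of $M$. The obstacle is that $P$ and the cofactors $Q_i$ are exponentially large in the input length $n \approx m \log m$, so they cannot be stored on an $O(\log n)$-bit work tape, and extracting even a single high-order bit of $\prod_i p_i$ is itself an iterated-product problem---precisely the kind of computation this theorem is meant to bootstrap. For the low-order bits of $M$ this is harmless, since $M \bmod 2^{k+1}$ only involves the small quantities $P \bmod 2^{k+1}$ and $Q_i \bmod 2^{k+1}$ when $k = O(\log n)$; but for the top $\Theta(m \log m)$ bits one must avoid materializing $P$ altogether. Overcoming this circularity---computing the digits of $M$ directly from the residues $y_i$ and the primes $p_i$ by a carefully staged bit-by-bit procedure with a controlled analysis of the carries---is the technical core, and I would import it from the construction of \cite{ChDaLi01}; an alternative derivation, which even places the conversion in $\mathsf{TC}^0$, is given in \cite{HeAlBa02}.
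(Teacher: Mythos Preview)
The paper does not give its own proof of this theorem: it is quoted verbatim as \cite[Thm.~3.3]{ChDaLi01} and used as a black box, with the additional remark that \cite{HeAlBa02} even places the conversion in $\mathsf{DLOGTIME}$-uniform $\mathsf{TC}^0$. There is nothing to compare your argument against.

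That said, your sketch is a faithful outline of the Chiu--Davida--Litow approach: the CRT reconstruction formula $M = \sum_i y_i Q_i - \alpha P$, the observation that the coefficients $y_i$ and the correction term $\alpha$ involve only $O(\log m)$-bit arithmetic (via modular iterated products and extended Euclid modulo small primes), and the identification of the genuine obstacle, namely that $P$ and the $Q_i$ are too large to store and so one must extract the bits of $M$ without ever forming them. One small slip: you write $\theta = \sum_i y_i/p_i = M/P$, but in fact $\theta = M/P + \alpha$; only the \emph{fractional part} of $\theta$ equals $M/P$. This does not affect the argument, since it is precisely the fractional bits of $\theta$ that you go on to compute. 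Since you explicitly defer the ``technical core'' to \cite{ChDaLi01}, your proposal and the paper's treatment are effectively the same: both invoke the cited result rather than reproving it.
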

By \cite{HeAlBa02}, the transformation from the {\sf CRR}-representation to the binary
representation can be even computed by  $\mathsf{DLOGTIME}$-uniform
$\mathsf{TC}^0$-circuits. 
%
Our second tool is a gap theorem for values $p(\zeta)$,
where $p(x) \in \mathbb{Z}[x]$ and $\zeta$ is a root of unity.
For a polynomial $p(x) = \sum_{i=0}^n a_i x^i$ with integer
coefficients $a_i$ let $|p(x)| = \sum_{i=0}^n |a_i|$.
\begin{theorem}[{\cite[Thm.~3]{Litow2010}}] \label{thm:canny}
Let $p(x) \in \mathbb{Z}[x]$ and let $\zeta$ be a $d^{th}$ root of unity
such that $p(\zeta) \neq 0$. Then $|p(\zeta)| > |p(x)|^{-d}$.
\end{theorem}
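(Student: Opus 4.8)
The plan is to bound $\abs{p(\zeta)}$ from below by passing to the field norm over the cyclotomic field generated by $\zeta$ and exploiting the elementary fact that a \emph{nonzero} algebraic integer has a field norm whose absolute value is at least $1$. Let $e$ be the multiplicative order of $\zeta$, so that $e \mid d$ and $\zeta$ is a primitive $e$-th root of unity. Put $K = \Q(\zeta)$; this is the $e$-th cyclotomic field, Galois over $\Q$ of degree $\varphi(e)$, whose Galois group consists of the maps $\zeta \mapsto \zeta^{j}$ for $j$ coprime to $e$.

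First I would observe that $\zeta$, being a root of the monic integer polynomial $x^{d}-1$, is an algebraic integer, and hence so is $p(\zeta)=\sum_{i} a_{i}\zeta^{i}$, because the algebraic integers form a ring and the $a_{i}$ lie in $\Z$. Applying the Galois automorphism $\zeta \mapsto \zeta^{j}$ carries $p(\zeta)$ to $p(\zeta^{j})$, so the field norm is
\[
N_{K/\Q}\bigl(p(\zeta)\bigr)\;=\;\prod_{\substack{1\le j\le e\\ \gcd(j,e)=1}} p(\zeta^{j}).
\]
Since $p(\zeta)$ is a nonzero algebraic integer, its norm is a nonzero rational integer, whence $\bigl|N_{K/\Q}(p(\zeta))\bigr|\ge 1$. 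Next I would bound each conjugate from above: because $\abs{\zeta^{ij}}=1$ for a root of unity, the triangle inequality gives $\abs{p(\zeta^{j})}\le \sum_{i}\abs{a_{i}}=\abs{p(x)}$ for every $j$. Isolating the single factor equal to $p(\zeta)$ in the norm product and bounding the remaining $\varphi(e)-1$ factors by $\abs{p(x)}$ then yields
\[
1\;\le\;\bigl|N_{K/\Q}(p(\zeta))\bigr|\;\le\;\abs{p(\zeta)}\cdot\abs{p(x)}^{\,\varphi(e)-1},
\]
so that $\abs{p(\zeta)}\ge \abs{p(x)}^{-(\varphi(e)-1)}$.

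It remains to convert this into the stated bound. Since $\varphi(e)\le e\le d$ and $\abs{p(x)}\ge 1$ (as $p\neq 0$), we obtain $\abs{p(\zeta)}\ge \abs{p(x)}^{-(d-1)}$; when $\abs{p(x)}>1$ the extra factor $\abs{p(x)}>1$ gives the strict inequality $\abs{p(x)}^{-(d-1)}>\abs{p(x)}^{-d}$, as required. (In the degenerate case $\abs{p(x)}=1$ the polynomial $p$ is a signed monomial $\pm x^{k}$, so $\abs{p(\zeta)}=1$ and the bound holds trivially.) I expect the main conceptual step to be the passage to the norm together with the justification that $N_{K/\Q}(p(\zeta))$ is a nonzero \emph{integer}; everything else---the triangle-inequality bound on the conjugates and the final arithmetic with the exponents---is routine.
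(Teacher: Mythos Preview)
The paper does not actually prove \refthm{thm:canny}; it merely quotes it from \cite{Litow2010} as a black box and uses it inside the proof of \reflem{lemma-main}. So there is no ``paper's own proof'' to compare against. That said, your argument via the field norm of $p(\zeta)$ over $\Q(\zeta)$ is exactly the classical route to such root-of-unity gap bounds (and is essentially what underlies the cited result): the nonzero algebraic integer $p(\zeta)$ has integral norm of absolute value at least $1$, each Galois conjugate is bounded by $\abs{p(x)}$ via the triangle inequality, and one solves for $\abs{p(\zeta)}$. Your chain $\varphi(e)-1 \le e-1 \le d-1$ is fine, and the resulting $\abs{p(\zeta)} \ge \abs{p(x)}^{-(d-1)}$ is in fact slightly sharper than the stated bound.

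There is one genuine slip, in your treatment of the degenerate case $\abs{p(x)}=1$. You correctly identify that then $p(x)=\pm x^{k}$ and $\abs{p(\zeta)}=1$, but you then assert that ``the bound holds trivially.'' It does not: here $\abs{p(x)}^{-d}=1^{-d}=1$, so the \emph{strict} inequality $\abs{p(\zeta)} > \abs{p(x)}^{-d}$ fails, and one only gets equality. In other words, the theorem as literally stated (with strict ``$>$'') is false for $p(x)=\pm x^{k}$; the correct general statement is $\abs{p(\zeta)} \ge \abs{p(x)}^{-d}$, with strict inequality whenever $\abs{p(x)}\ge 2$ (which is what your argument actually proves). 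This is harmless for the paper's application in \reflem{lemma-main}, where one only needs a lower bound of the form $2^{-c\abs{w}}$ and the degenerate value $1$ clears that easily; but you should state the non-strict inequality and note the strict case separately rather than claim the strict bound ``holds trivially'' when it does not.
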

Finally, our third tool for the proof of \reflem{lemma-main}
is the following result, which was recently shown (independently)
in \cite{DaPra11,Jer11}.

\begin{theorem}[{\cite[Thm~.2]{DaPra11}},{\cite[Cor.~4.6]{Jer11}}] \label{thm-approx-alg}
For every fixed algebraic number $\alpha\in\mathbb{R}$ the following
problem can be computed in logspace:

\noindent
INPUT: A unary coded number $n$. 

\noindent
OUTPUT: 
A binary representation of the integer $\floor{2^n\alp}$. 
\end{theorem}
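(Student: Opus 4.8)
The plan is to reduce the computation of $\floor{2^n\alp}$ to integer sign tests together with one integer division, both of which are logspace-computable via \refthm{theorem hesse und co}. We may assume $\alp$ is irrational of degree $d=\deg p\ge 2$: if $\alp=a/b\in\Q$, then $\floor{2^n\alp}=\floor{2^n a/b}$ is a single integer division, and integer division is logspace (a consequence of \refthm{theorem hesse und co}). So fix once and for all the minimal polynomial $p(x)=\sum_{i=0}^{d} a_i x^i\in\Z[x]$ of $\alp$ (squarefree, $p(\alp)=0$) together with dyadic rationals $c<\alp<d'$ such that $\alp$ is the unique root of $p$ in $[c,d']$ and $p$ is strictly increasing there; all of this is constant data depending only on the fixed number $\alp$ and is hard-wired into the machine. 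For a dyadic $r=k/2^N\in[c,d']$ we then have $r\le\alp$ \IFF $p(r)\le 0$, and the sign of $p(k/2^N)$ equals the sign of the integer
\[
P(k,N)\;=\;2^{Nd}\,p(k/2^N)\;=\;\sum_{i=0}^{d} a_i\,k^{i}\,2^{N(d-i)},
\]
which, for $k=\Oh(2^N)$, has only $\Oh(N)$ bits.

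The sign of $P(k,N)$ can be decided in logspace. For each of the first $M=\Oh(N)$ primes $p_\ell$, the residue $P(k,N)\bmod p_\ell$ is a product of $d+1$ (a constant) terms, each a modular power with exponent $\Oh(N)$; it is therefore computable in $\Oh(\log N)$ space. Adding a fixed power of two shifts $P(k,N)$ into the range $[0,\prod_{\ell\le M}p_\ell)$, so the residue generator, followed by the transducer of \refthm{theorem hesse und co}, converts the Chinese-remainder representation into binary, from which the sign is read off. Since logspace transducers are closed under composition, this comparison \textsf{COMPARE}$(k,N)$ runs in logspace, and no $\Oh(N)$-bit intermediate is ever stored on the work tape.

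The key quantitative input is a root-separation gap. Since $\alp$ is irrational, $k/2^n$ is never a root of $p$, so the nonzero integer $P(k,n)$ has $\abs{P(k,n)}\ge 1$, i.e.\ $\abs{p(k/2^n)}\ge 2^{-nd}$; combined with $\abs{p(k/2^n)}\le L\,\abs{k/2^n-\alp}$ (mean value theorem, $L=\max_{[c,d']}\abs{p'}$) this yields $\abs{2^n\alp-k}\ge C\,2^{-n(d-1)}$ for every integer $k$ and a fixed $C>0$. Hence $2^n\alp$ stays bounded away from all integers. Consequently, any dyadic approximation $\ti\alp=A/2^N$ with $N=\Oh(nd)$ and $\abs{\alp-\ti\alp}<C\,2^{-nd}$ satisfies $\floor{2^n\alp}=\floor{2^n A/2^N}$, because $2^n\alp$ and $2^n\ti\alp$ then lie on the same side of every integer; the right-hand side is a shift of $A$, whose binary form is produced most-significant-bit first by the logspace integer-division routine underlying \refthm{theorem hesse und co}.

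The main obstacle is the remaining ingredient: producing, in logspace, the $\Oh(n)$-bit approximation $\ti\alp=A/2^N$ of the fixed number $\alp$, equivalently its binary digits up to position $\Oh(n)$. A single such digit depends on the entire higher-order prefix, so it cannot be read off locally, and a naive bisection using \textsf{COMPARE} would be forced to store the $\Theta(n)$-bit candidate. The resolution is never to materialise $A$ in binary on the work tape: one exhibits a convergent approximation whose numerator and denominator are \emph{iterated products} of fixed integer data (obtained by powering to the exponent $N$ a fixed integer matrix attached to $p$, whose entries approximate $2^N\alp$ to precision $2^{-\Omega(N)}$ by the separation gap), so that $A/2^N$, and finally $\floor{2^n\alp}$, is an integer division of two logspace-computable iterated products. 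Since iterated product and integer division are logspace (again via \refthm{theorem hesse und co}) and logspace is closed under composition, each output digit is regenerated on demand rather than stored, and the whole computation fits in $\Oh(\log n)$ space. Making the matrix-powering approximation and its error analysis precise for an \emph{arbitrary} real algebraic $\alp$ is the technical heart of the argument.
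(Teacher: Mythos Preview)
The paper does not prove this statement; it is quoted from \cite{DaPra11,Jer11} and used as a black box in the proof of \reflem{lemma-main}, so there is nothing in the paper to compare your attempt against.

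Assessing your sketch on its own: the first three paragraphs are correct (sign of $p(k/2^N)$ via the integer $P(k,N)$ and CRR; the Liouville gap $|2^n\alp-k|\ge C\,2^{-n(d-1)}$; the diagnosis that naive bisection would need $\Theta(n)$ work-tape bits). The gap is entirely in the last paragraph, which you yourself label the technical heart but do not carry out. You posit ``a fixed integer matrix attached to $p$'' whose $N$-th power yields a $2^{-\Omega(N)}$-approximation to $\alp$, without saying which matrix or why. The obvious candidate, the companion matrix $M$ of $p$, has $\mathrm{tr}(M^N)=s_N=\sum_i\alp_i^{\,N}$, and $s_{N+1}/s_N\to\alp$ only when $\alp$ is the conjugate of \emph{largest modulus}; for an arbitrary designated real root this fails (e.g.\ $\alp=1$ with $p(x)=(x-1)(x-2)$), and a real shift $\alp\mapsto\alp+c$ does not repair it, since for large $c$ dominance among the $\alp_i+c$ is governed by the real parts $\mathrm{Re}\,\alp_i$. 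One can force dominance by a M\"obius change such as $\alp\mapsto 1/(\alp-r)$ with $r$ a fixed rational strictly closer to $\alp$ than to any other root, after which power-sum ratios do converge geometrically and are quotients of iterated integer products computable in logspace; but this construction and its error analysis are exactly what is missing. As written, your last paragraph is a plan, not a proof.
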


\begin{remark}\label{rem:jer11}
The result of \cite{Jer11} is actually stronger showing that the output 
in \refthm{thm-approx-alg}
can be computed in uniform $\mathsf{TC}^0$. 
\end{remark}
{\em Proof of \reflem{lemma-main}.}
We decompose the 
logspace algorithm into several logspace computations.
The linear mapping $\sig_w$ can be written as a product
of matrices $A_1 A_2 \cdots A_{|w|}$, where every $A_i$ is an
$(n \times n)$-matrix with  entries from $\{0, 1,2\} \cup
\{2 \cos(\pi/m_{i,j})\mid m_{i,j}\neq 0 \}$ (which is the set of 
coefficients that appear in \eqref{eq-geo-repr}). Then, we have to check
whether this matrix product applied to the unit vector $a$ has a non-negative
value in the $b$-coordinate. This value is the entry $(A_1 A_2 \cdots
A_{|w|})_{a,b}$ of the product matrix $A_1 A_2 \cdots A_{|w|}$.

Let $m$ be the least common multiple of all $m_{i,j} \neq 0$; this is
still a constant.
Let $\zeta = e^{\pi i/m} $, which is a primitive $(2m)^{th}$ root of unity.
If $m = m_{i,j} \cdot k$,  we have 
$$
2 \cdot \cos \bigg( \frac{\pi}{m_{i,j}} \bigg) =   \zeta^k +
\zeta^{2m-k} .
$$
Hence, we can assume that every $A_i$ is an $(n \times n)$-matrix over  $\mathbb{Z}[\zeta]$.
We now replace $\zeta$ by a variable $X$ in all matrices
$A_1, \ldots, A_{|w|}$; let us denote the resulting matrices over 
the ring $\mathbb{Z}[X]$ with $B_1, \ldots, B_{|w|}$.
Each entry in one of these matrices is a polynomial of degree $<2m$
with coefficients bounded by $2$. More precisely,  for every entry $p(X)$ 
of a matrix $B_i$ we have $|p(X)| \leq 2$.
Let $|B_i|$ be the sum of all $|p(X)|$ taken over all entries of the matrix $B_i$.
Hence, $|B_i| \leq 2 n^2$.

\medskip
\noindent
{\em Step 1.}  
In a first step, we show that the product $B_1 \cdots B_{|w|}$ can be
computed in logspace in the ring 
$\mathbb{Z}[X]/(X^{2m}-1)$ (keeping in mind that $\zeta^{2m}=1$). 
Every entry in the product $B_1 \cdots B_{|w|}$ is a polynomial
of degree $<2m$ with coefficients bounded in absolute value  by 
$|B_1| \cdots |B_{|w|}| \leq (2n^2)^{|w|}$.
Here $n$ is a fixed constant.  Hence, every
coefficient in the matrix  $B_1 \cdots B_{|w|}$ can be represented 
with $O(|w|)$ bits. In logspace, one can compute a list of 
the first $k$ prime numbers $p_1, p_2, \ldots, p_k$, where $k \in O(|w|)$
is chosen such that $\prod_{i=1}^k p_i > (2n^2)^{|w|}$ \cite{ChDaLi01}.
Each $p_i$ is bounded by $|w|^{O(1)}$. 

For every $1 \leq i \leq k$, we can compute in logspace 
the matrix product $B_1 \cdots B_{|w|}$ in
$\mathbb{F}_{p_i}[X]/(X^{2m}-1)$, i.e., we compute the coefficient of each
polynomial in $B_1 \cdots B_{|w|}$ modulo $p_i$. In the language of
\cite{ChDaLi01}: For each  coefficient of a
polynomial in $B_1 \cdots B_{|w|}$, we compute its Chines remainder
representation. From this representation, we can compute in logspace
by \refthm{theorem hesse und co} the binary representation of the coefficient.
This shows that the product $B = B_1 \cdots B_{|w|}$ can be computed
in the ring $\mathbb{Z}[X]/(X^{2m}-1)$.

\medskip
\noindent
{\em Step 2.} We know that if $X$ is substitued by $\zeta$ in the
matrix $B$, then we obtain the product  $A = A_1 \cdots A_{|w|}$ (the
matrix we are actully interested in), which is 
a matrix over  $\mathbb{R}$. Every entry of the matrix
$A$ is of the form $\sum_{j=0}^{2m-1} a_j \zeta^j$, where
$a_j$ is a number with $O(|w|)$ bits that we have computed in Step 1. If 
$\sum_{j=0}^{2m-1} a_j \zeta^j \neq 0$, then by 
\refthm{thm:canny}, we have
$$
\left|\sum_{j=0}^{2m-1} a_j \zeta^j \right| > 
\left(\sum_{j=0}^{2m-1} |a_j|\right)^{-2m} .
$$
Since $m$ is a constant, and $|a_j| \leq 2^{O(|w|)}$, we have 
$$
\sum_{j=0}^{2m-1} a_j \zeta^j = 0 \quad \text{ or } \quad
\left|\sum_{j=0}^{2m-1} a_j \zeta^j \right| > 2^{-c|w|}
$$
for a constant $c$. Therefore, to check whether
$\sum_{j=0}^{2m-1} a_j \zeta^j \geq 0$ or
$\sum_{j=0}^{2m-1} a_j \zeta^j \leq 0$, it suffices
to approximate this sum up to $c|w|$ many fractional 
bits. This is the goal of the second step.

Since we are sure that $\sum_{j=0}^{2m-1} a_j \zeta^j \in \mathbb{R}$,
we can replace the sum symbolically by its real part, which is 
$\sum_{j=0}^{2m-1} a_j \cos(j\pi/m)$. In order to approximate 
this sum up to $c|w|$ many fractional bits, it suffices to approximate each
$\cos(j\pi/m)$ up to $d |w|$ many fractional bits (recall that $a_j \in
2^{O(|w|)}$), where the constant $d$ is large enough.

Every number $\cos(q \cdot \pi)$ for $q \in \mathbb{Q}$
is algebraic; this seems to be folklore and follows easily
from DeMoivre's formula 
($(\cos\theta+i\sin\theta)^n=\cos(n\theta)+i\sin(n\theta)$).
Theorefore, by \refthm{thm-approx-alg}, every number 
$\cos(j\pi/m)$ ($0 \leq j \leq 2m-1$) can be approximated in logspace
up to $d |w|$ many fractional bits. This concludes the proof.
\qed

\medskip

\noindent
\reflem{lemma-main} can be used in order to compute in logspace
the geodesic length $\Abs{w}$ for a given group element $w \in G$:

\begin{theorem} \label{thm-geodesic-length}
For a given word $w \in \Sigma^*$, the geodesic length $\Abs w$ can be computed
in logspace.
\end{theorem}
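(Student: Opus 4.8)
The plan is to compute $\Abs{w}$ by processing the input word $w = c_1 c_2 \cdots c_\ell$ (with each $c_k \in \Sigma$) one letter at a time, maintaining the geodesic length of the prefix. Writing $w_k = c_1 \cdots c_k$, we have $w_0 = 1$ with $\Abs{w_0} = 0$, and by \reflem{lem:geqzero} the value $\Abs{w_{k+1}}$ equals $\Abs{w_k} + 1$ if $\sigma_{w_k}(c_{k+1}) \geq 0$ and $\Abs{w_k} - 1$ if $\sigma_{w_k}(c_{k+1}) \leq 0$. (Since $\sigma_{w_k}$ is invertible, $\sigma_{w_k}(c_{k+1})$ is nonzero, and a standard fact about the geometric representation guarantees that exactly one of the two cases holds, i.e.\ the vector is either coordinatewise $\geq 0$ or coordinatewise $\leq 0$.) Thus $\Abs{w} = \Abs{w_\ell}$ is obtained by $\ell$ increments/decrements, and the only thing we need at step $k$ is to decide the sign of the vector $\sigma_{w_k}(c_{k+1})$.

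The sign test is exactly what \reflem{lemma-main} provides: for a fixed generator $b \in \Sigma$ it checks in logspace whether the $b$-coordinate $\lambda_b$ of $\sigma_{w_k}(c_{k+1})$ is $\geq 0$. To decide whether the whole vector is $\geq 0$ (respectively $\leq 0$), I run \reflem{lemma-main} for each of the finitely many $b \in \Sigma$ on the word $w_k$ and the letter $c_{k+1}$; since $|\Sigma| = n$ is a constant, this is a constant number of logspace subroutine calls, and their conjunction tells us the case in \reflem{lem:geqzero}. Here I use that a logspace machine may call a logspace subroutine: rather than store its (potentially long) input $w_k$ on the work tape, the outer machine simulates the inner machine and, whenever the inner machine wants to read position $j$ of its input $w_k$, the outer machine recovers $c_j$ by indexing into the actual input tape (valid since $w_k$ is a prefix of $w$), keeping only $O(\log \ell)$ bits of bookkeeping. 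Composition of logspace-computable functions/predicates in this sense is standard (see \cite{Papa}), so each sign decision costs only logarithmic space.

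Finally I must account for the running counter holding $\Abs{w_k}$. Since $0 \leq \Abs{w_k} \leq k \leq \ell$, this counter needs only $O(\log \ell)$ bits, so it fits on the work tape. The overall algorithm is: initialize a counter to $0$; for $k = 0, 1, \ldots, \ell-1$, use the $n$ calls to \reflem{lemma-main} (on prefix $w_k$ and letter $c_{k+1}$) to determine whether $\sigma_{w_k}(c_{k+1}) \geq 0$ or $\leq 0$, and increment or decrement the counter accordingly; after the loop, output the counter in binary. All stored data — the loop index $k$, the index into the current prefix requested by the subroutine, the subroutine's own work tape, and the counter — is of size $O(\log \ell)$, so the whole computation is in logspace.

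The main obstacle is really just the careful handling of the subroutine composition: one has to be sure that feeding the prefix $w_k$ to the machine of \reflem{lemma-main} does not require writing $w_k$ down, which is resolved by the prefix-indexing trick above, together with the (standard but worth stating) observation that the predicate of \reflem{lemma-main} is uniform in the prefix length. Everything else — the per-letter recurrence from \reflem{lem:geqzero}, the dichotomy $\sigma_{w_k}(c_{k+1}) \geq 0$ or $\leq 0$, and the size bound on the counter — is immediate.
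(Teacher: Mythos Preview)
Your proposal is correct and follows essentially the same approach as the paper: iterate over prefixes, use \reflem{lem:geqzero} to increment or decrement a length counter, and invoke \reflem{lemma-main} for the sign test at each step. Your write-up is more explicit about the logspace bookkeeping (counter size, composition via prefix-indexing), but the underlying algorithm is identical.
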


\begin{proof}
By \reflem{lem:geqzero}, the following algorithm correctly computes $\Abs w$
for $w = a_1 \cdots a_k$.

\medskip
\noindent
$\ell := 0$;\\
{\bf for} $i=1$ {\bf to} $k$ {\bf do} \\
\hspace*{.5cm} {\bf if} $\sig_{a_1 \cdots a_{i-1}}(a_i) \geq 0$ {\bf then}\\
\hspace*{1cm} $\ell := \ell+1$\\
\hspace*{.5cm} {\bf else}\\
\hspace*{1cm} $\ell := \ell-1$\\
\hspace*{.5cm} {\bf endif} \\
{\bf endfor} \\ 
{\bf return} $\ell$.

\medskip
\noindent
By \reflem{lemma-main} it can be implemented in logspace.
\qed
\end{proof}
We finally apply \reflem{lemma-main} in order to compute 
in logspace the set of all letters that occur in a geodesic for 
a given group element $w \in G$. As remarked before, this alphabet
is independent of the concrete geodesic for $w$.

Introduce a new letter $x\not\in\Sigma$ with $x^2 =1$,
but no other new defining relation. 
This yields the Coxeter group $G' = G * (\Z/ 2\Z)$ generated by $\Sig'= \Sig \cup \os{x}$.
Thus, $ax$ is of infinite order in $G'$ for 
all $a \in \Sig$. Clearly, $\Abs{wx} > \Abs{w}$ for all $w \in G$. Hence,
$\sig_w(x) \geq 0$ for all $w \in G$ by \reflem{lem:geqzero}.

\begin{lemma}\label{lem:x}
Let $w \in G$ and $\sig_w(x) = \sum_{b\in \Sig'} \lambda_b b $. 
Then for all $b \in \Sig$  we have $\lambda_b \neq 0$ \IFF the letter
$b$ appears in the shortlex normal form of $w$.
\end{lemma}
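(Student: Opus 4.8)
The plan is to reduce the statement to one about geodesic alphabets and then prove it by induction on the geodesic length $\Abs{w}$. By \cite[Cor.~1.4.8]{bjofra05} all geodesics of a given element share the same alphabet, so ``$b$ appears in the shortlex normal form of $w$'' is equivalent to ``$b$ lies in the common alphabet $\alpha(w)$ of the geodesics of $w$''. For the induction step I would take the shortlex normal form $u$ of $w$, write $u = u'a$ with $a \in \Sig$ its last letter, and set $w' = \pi(u')$. A standard prefix argument shows $u'$ is a geodesic of $w'$, hence $\Abs{w'} = \Abs{w}-1$, the word $u'a$ is a geodesic of $w$, and therefore $\alpha(w) = \alpha(w') \cup \{a\}$. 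The base case $w = 1$ is immediate, since $\sig_w(x) = x$ forces $\lambda_b = 0$ for every $b \in \Sig$, which matches the empty normal form.

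The computation rests on two elementary facts about $\sig$. First, \eqref{eq-geo-repr} shows that for every $c \in \Sig'$ the vector $\sig_c(v) - v$ is a scalar multiple of $c$; thus $\sig_c$ alters only the $c$-coordinate, and, by induction on word length, \emph{if $b$ does not occur in a word $u$ then $\sig_u$ preserves the $b$-coordinate of every vector}. Second, since $ax$ has infinite order in $G'$, \eqref{eq-geo-repr} gives $\sig_a(x) = x + 2a$, so because $\sig$ is a homomorphism we get $\sig_w(x) = \sig_{w'}(\sig_a(x)) = \sig_{w'}(x) + 2\,\sig_{w'}(a)$. Writing $\sig_{w'}(x) = \sum_e \mu_e e$ and $\sig_{w'}(a) = \sum_e \nu_e e$, the inequality $\sig_v(x) \ge 0$ established just before the lemma gives $\mu_e \ge 0$, and \reflem{lem:geqzero} applied to $w'$ and $a$ (using $\Abs{w'a} = \Abs{w'}+1$) gives $\sig_{w'}(a) \ge 0$, hence $\nu_e \ge 0$. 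Consequently, for $b \in \Sig$ we have $\lambda_b = \mu_b + 2\nu_b \ge 0$ with \emph{no cancellation}, so $\lambda_b \neq 0$ holds exactly when $\mu_b > 0$ or $\nu_b > 0$.

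It then remains to match this against $\alpha(w) = \alpha(w') \cup \{a\}$ via the inductive hypothesis for $w'$ (which says $\mu_b > 0$ iff $b \in \alpha(w')$). If $b \in \alpha(w')$ then $\mu_b > 0$, so $\lambda_b > 0$ and $b \in \alpha(w)$. If $b \notin \alpha(w') \cup \{a\}$ then $\mu_b = 0$ by induction, and since $b$ occurs neither in $u'$ nor equals $a$, the coordinate-preservation fact applied to the vector $a$ forces $\nu_b = 0$, so $\lambda_b = 0$ and $b \notin \alpha(w)$. The one genuinely new point is the case $b = a$ with $a \notin \alpha(w')$: here $\mu_a = 0$, but $w'$ is represented by the word $u'$, which does not contain $a$, so $\sig_{w'}$ preserves the $a$-coordinate of the vector $a$, giving $\nu_a = 1$ and hence $\lambda_a \ge 2 > 0$, consistent with $a \in \alpha(w)$. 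This closes the induction. The main obstacle to anticipate is exactly this last case: one must exclude the possibility that the $a$-coordinate of $\sig_{w'}(a)$ vanishes, and the clean way to do it is the observation that applying generators other than $a$ never disturbs the $a$-coordinate; the subsidiary point is to keep track of signs so that the sum $\mu_b + 2\nu_b$ cannot cancel.
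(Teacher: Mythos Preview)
Your proof is correct and follows essentially the same approach as the paper's own argument: both induct on $\Abs{w}$, split off a last letter to write $w=w'a$ with $\Abs{w'a}>\Abs{w'}$, use $\sig_w(x)=\sig_{w'}(x)+2\sig_{w'}(a)$ together with \reflem{lem:geqzero} to rule out cancellation, and handle the special case $b=a\notin\alpha(w')$ by observing that generators absent from $u'$ cannot disturb the $a$-coordinate of $a$. Your write-up is in fact somewhat more explicit than the paper's (you state and use the coordinate-preservation fact directly, whereas the paper simply declares the analogous steps ``clear''), but the underlying argument is the same.
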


\begin{proof}
We may assume that $w$ is a geodesic in $G$. We prove the result
by induction on $\Abs w = |w|$. If $w=1$, then the assertion is trivial.
If $b \in \Sig$ does not occur as a letter in $w$, then it is clear that 
$\lambda_b = 0$. Thus, we may assume that $b \in \alpha(w)$ and we have to 
show that $\lambda_b \neq 0$.
By induction, we may write $w= ua$ with $\Abs{uax} > \Abs{ua } > \Abs{u}$. 
 We have $\sig_w(x) = \sig_u\sig_a(x) = \sig_u(x + 2a) = \sig_u(x) + 2\sig_u(a)$.
The standard geometric representation yields moreover
$\sig_w(x) = x + \sum_{c\in \Sig} \lambda_c c$, where
$\lambda_c \geq 0$ for all $c\in \Sig$ by \reflem{lem:geqzero}. 
As $\Abs{ua } > \Abs{u}$ we get $\sig_u(a)\geq 0$ by
\reflem{lem:geqzero}. 
Moreover, by induction
(and the fact $\Abs{ux} > \Abs{u}$), we know that for all letters
$c\in \alp(u)$ the corresponding coefficient
in $\sig_u(x)$ is strictly positive.
Thus, we are done if $b \in \alpha(u)$. 
So, the remaining case is that $b = a \not\in\alpha(u)$. 
However, in this case $\sig_u(a) = a + \sum_{c\in \Sig \setminus\{a\}} \mu_c c$. Hence $\lambda_a \geq 2$. 
\qed
\end{proof}
 
\begin{theorem}\label{thm:allcoxoderwas}
There is a logspace transducer which on input $w \in \Sig^*$ 
computes the set of letters occurring in the shortlex normal form 
of $w$. 
\end{theorem}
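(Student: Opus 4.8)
The plan is to combine \reflem{lem:x} with \reflem{lemma-main}, applied inside the auxiliary Coxeter group $G' = G * (\Z/2\Z)$ rather than $G$ itself. First I would note that $G'$ is again a Coxeter group given by a fixed $(n+1)\times(n+1)$ matrix $M'$, obtained from $M$ by adjoining the new generator $x$ with $m'_{i,x} = 0$ (equivalently $\infty$) for every $i$. Since $G$ is fixed, so is $G'$, and hence \reflem{lemma-main} applies verbatim to $G'$: for a given $w \in \Sig^* \subseteq (\Sig')^*$ and any $b \in \Sig'$, one can check in logspace whether the $b$-coordinate $\lambda_b$ of $\sig^{G'}_w(x)$ is non-negative, where $\sig^{G'}$ denotes the standard geometric representation of $G'$.

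The key observation is that \reflem{lemma-main} gives us the sign of each coefficient, and in particular lets us decide whether $\lambda_b \geq 0$ \emph{and} whether $\lambda_b \leq 0$; combining the two tests we can decide whether $\lambda_b = 0$ or $\lambda_b \neq 0$. By \reflem{lem:x}, for $b \in \Sig$ the condition $\lambda_b \neq 0$ holds if and only if $b$ occurs in the shortlex normal form of $w$ (equivalently, in some, hence every, geodesic for $w$). So the transducer works as follows: on input $w \in \Sig^*$, for each $b \in \Sig$ in the fixed linear order, run the logspace procedure of \reflem{lemma-main} (inside $G'$) twice to determine the sign of the $b$-coordinate of $\sig^{G'}_w(x)$; if this coordinate is nonzero, write $b$ to the output tape. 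Since $\Sig$ has constantly many letters, this is a constant number of logspace subroutines, and logspace is closed under such finite sequential composition, so the whole transducer runs in logspace and outputs exactly the geodesic alphabet of $w$.

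The one point that needs a remark is that \reflem{lemma-main} as stated takes $w \in G$ and $a,b \in \Sig$ and checks the $b$-coordinate of $\sig_w(a)$, whereas here we need the $x$-coordinate... no, we need the $b$-coordinate of $\sig^{G'}_w(x)$, i.e., we apply the lemma with the distinguished generator $x$ in the role of "$a$" and with $b$ ranging over $\Sig$. This is exactly an instance of \reflem{lemma-main} for the group $G'$, whose Coxeter matrix contains entries equal to $0$ (the pairs $(i,x)$); but the proof of \reflem{lemma-main} explicitly allows matrix entries from $\{0,1,2\}$, so nothing changes. I do not expect a genuine obstacle here — the content has already been established in \reflem{lem:x} and \reflem{lemma-main}, and the proof of \refthm{thm:allcoxoderwas} is essentially a bookkeeping argument assembling those two lemmas together with the closure of logspace under composition. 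The only mild care required is to make sure the auxiliary group $G'$ is treated as a fixed Coxeter group (which it is, since $G$ is fixed) so that \reflem{lemma-main} applies with its constants depending only on $G'$.
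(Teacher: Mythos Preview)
Your proposal is correct and follows essentially the same approach as the paper: reduce via \reflem{lem:x} to testing whether each coefficient $\lambda_b$ of $\sig^{G'}_w(x)$ vanishes, and invoke \reflem{lemma-main} in the fixed auxiliary Coxeter group $G'$ to perform that test in logspace. You are simply more explicit than the paper about why $G'$ is again a fixed Coxeter group and about how the sign test of \reflem{lemma-main} yields a zero/nonzero test (checking both $\lambda_b\geq 0$ and $\lambda_b\leq 0$); the paper leaves these points implicit.
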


\begin{proof}
By \reflem{lem:x}, we have to check for every letter $b \in
\Sigma$, whether $\lambda_b = 0$, where $\sum_{b\in \Sig'} \lambda_b b =
\sig_w(x)$. By \reflem{lemma-main} (applied to the Coxeter group
 $G'$) this is possible in logspace.
\qed
\end{proof}
Let us remark that the use of \reflem{lemma-main} in the proof of
\refthm{thm:allcoxoderwas} can be avoided,
using the technique from \cite{lz77} and 
\reflem{lem:x}. 
Every $\lambda_b$ belongs to the ring 
$\Z[\zeta] \cong \Z[X]/\Phi(X)$, where  
$\zeta$ is a primitive $(2m)^{th}$ root of unity,
$\Phi(X)$ is the $(2m)^{th}$ cyclotomic polynomial, and
$m$ is the least common multiple of all $m_{i,j} \neq 0$.
In order to check, whether $\lambda_b = 0$, we 
can check whether the value is zero $\bmod \; r$ with respect to all $r$ 
up to a polynomial threshold.

\subsection{Computing the geodesic Parikh-image in even Coxeter groups}\label{sec:evencox}

In this section we assume that $G$ is an even Coxeter group. Thus,
the entries $m_{i,j}$ are even for all $i \neq j$. 

Let $a \in \Sigma$ be a letter and $w \in \Sigma^*$. 
By $\abs{w}_a$ we denote the number of $a$'s 
in a word $w\in \Gam^*$. The {\em Parikh-image} of $w$ is the 
vector $[\, \abs{w}_a\, ]_{a \in \Sigma} \in \N^\Sigma$.  
In other words, the Parikh-image of $w$ is the image of
$w$ under the canonical homomorphism from the free monoid
$\Sigma^*$ to the free commutative monoid $\N^{\Sigma}$.

We show that for even Coxeter groups, 
the Parikh-image of geodesics can be computed in logspace. 
Actually, all geodesics for a given group element of an
even Coxeter group have the same Parikh-image:

\begin{lemma}\label{welldef}
Let $G$ be an even Coxeter group and
let $u,v \in \Sigma^*$ be geodesics with $u=v$ in $G$.
Then we have $\abs{u}_a = \abs{v}_a$ for all $a \in \Sigma$.
\end{lemma}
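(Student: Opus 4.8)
The plan is to exploit the parity structure of an even Coxeter group by passing to an abelian quotient where the Parikh-image of a geodesic becomes a well-defined invariant. Consider the homomorphism $\phi \colon G \to (\Z/2\Z)^{\Sigma}$ (equivalently, to the abelianization of $G$, which for an even Coxeter group is exactly $(\Z/2\Z)^{\Sigma}$ since every defining relator $(a_ia_j)^{m_{i,j}}$ with $m_{i,j}$ even maps to $0$ in the abelianization, and of course $a_i^2 = 1$). Under $\phi$, the image of a word $w \in \Sigma^*$ records precisely the parity vector $[\,\abs{w}_a \bmod 2\,]_{a \in \Sigma}$. So if $u = v$ in $G$, then $\abs{u}_a \equiv \abs{v}_a \pmod 2$ for every $a$. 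This gives the congruence half of the claim for free, and it is the essential algebraic input.

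Next I would upgrade the mod-$2$ statement to equality using geodesy. The key point is that in a Coxeter group the \emph{alphabet} of a geodesic depends only on the group element (cited in the excerpt from \cite[Cor.~1.4.8]{bjofra05}); combined with the parity statement, this pins down each count modulo $2$ on a fixed support. To get honest equality I would argue by induction on $\Abs{u} = \abs{u}$, peeling off a letter: write $u = u'a$ with $u'$ a geodesic, so $\Abs{u'} = \Abs{u} - 1$. Since $u = v$ and $v$ is also geodesic, $\sigma_v(a) \le 0$ by \reflem{lem:geqzero} (as $\Abs{va^{-1}} = \Abs{u'} < \Abs{v}$, using $a = a^{-1}$), meaning a geodesic for $v$ "ends with an $a$" in the sense that $\Abs{va} < \Abs{v}$; one then finds a geodesic $v''a$ for $v$ with $v'' = u'$ in $G$, and applies the induction hypothesis to $u'$ and $v''$, adding back one $a$ to each. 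The subtlety here is producing such a $v''$: this is the standard exchange/deletion property for Coxeter groups — if $\Abs{va} < \Abs{v}$ then $v$ has a reduced expression ending in $a$ — which I would quote from \cite{bjofra05}.

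The step I expect to be the main obstacle is the inductive rewriting of the geodesic for $v$ so that it ends in the chosen letter $a$ while remaining geodesic and equal to $u'$ after deleting that last $a$; one must be careful that the exchange condition gives a reduced word ending in $a$ but a priori representing $v$, not that the specific original word $v$ can be so rearranged — so the induction should really be phrased as a statement about the group element (all geodesics have the same Parikh-image) rather than about two fixed words, and the "$v''$" is just \emph{some} geodesic for $va$. With that framing the induction goes through cleanly: $\abs{u}_b = \abs{u'}_b + [b = a] = \abs{v''}_b + [b=a] = \abs{v}_b$ for all $b$, using the induction hypothesis $\abs{u'}_b = \abs{v''}_b$. (An alternative, perhaps cleaner, route avoiding the exchange lemma entirely: prove directly that $\Abs{w}$ together with the mod-$2$ data $\phi(w)$ determines the Parikh-image, by showing the Parikh-image of any geodesic for $w$ is the unique nonnegative integer vector with the right total sum, the right parities, and support equal to the geodesic alphabet of $w$ — but this uniqueness is false in general, so one genuinely needs the inductive exchange argument, which is why I flag it as the crux.)

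Once \reflem{welldef} is in hand, the transducer of \refthm{thm-parikh} will presumably combine \refthm{thm-geodesic-length} (giving $\Abs{w} = \sum_a \abs{w}_a$) with \refthm{thm:allcoxoderwas} (giving the support) and the parity vector $\phi(w)$, which is trivially logspace-computable; but that is the content of the next theorem, not of this lemma.
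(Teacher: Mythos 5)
Your reduction mod $2$ is fine (for an even Coxeter group the abelianization is indeed $(\Z/2\Z)^{\Sigma}$, and this is exactly where evenness must enter), but the induction step you build on top of it is circular at its last equality. After writing $u=u'a$ and invoking the exchange property to obtain \emph{some} geodesic $v''a$ for the element represented by $v$, the induction hypothesis (applied to the shorter element $va$) gives $\abs{u'}_b=\abs{v''}_b$, hence $\abs{u}_b=\abs{v''a}_b$. But your final step $\abs{v''a}_b=\abs{v}_b$ compares two geodesics of the \emph{same} element of the \emph{same} length $n$ -- which is precisely the statement being proved at level $n$. Rephrasing the induction as a statement about the group element does not remove this: the hypothesis only covers elements of length $n-1$, and if the last letter of the concrete word $v$ differs from $a$, nothing you have established relates $v$ to the constructed word $v''a$. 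Note also that the parity statement is never actually used in your inductive chain, and, as you concede yourself, length plus support plus parities do not determine the Parikh-image, so it cannot be waved in afterwards.

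The argument can be repaired, and then parity does real work. Use the exchange/deletion condition in its sharp form: if $v=b_1\cdots b_n$ is reduced and $\Abs{va}<\Abs{v}$, then $va=b_1\cdots b_{i-1}b_{i+1}\cdots b_n$ in $G$ for some $i$; take $v''$ to be \emph{this} word, obtained from $v$ by deleting the single letter $b_i$, so that $\abs{v}_b=\abs{v''}_b+[b=b_i]$ exactly. With the induction hypothesis $\abs{u'}_b=\abs{v''}_b$ one gets $\abs{u}_b=\abs{v''}_b+[b=a]$, and the mod-$2$ identity $\abs{u}_b\equiv\abs{v}_b$ forces $[b=a]\equiv[b=b_i]\pmod 2$ for all $b$, hence $b_i=a$, which closes the induction (and is consistent with the failure of the lemma for odd entries, e.g.\ $aba=bab$). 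This repaired route is a legitimate alternative, but note that the paper's own proof is much shorter: it quotes Tits' theorem that any two reduced words for the same element are connected by braid moves alone, and observes that in an even group each braid move $(ab)^{m/2}\to(ba)^{m/2}$ preserves the Parikh-image; your approach essentially re-proves a weak form of that fact by hand.
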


\begin{proof}
Let $a,b \in \Gamma$ be  letters such that 
$(ab)^m=1$ for some $m\geq 2$. Since $G$ is even, all such values $m$ are even 
and we obtain the relation $(ab)^{m/2}=(ba)^{m/2}$ which does not effect the 
Parikh-image. Now, it follows from a well-known result about Tits' rules (c.f.~\cite{bjofra05}) that geodesics can be transformed into each other 
by using the relations  $(ab)^{m/2}=(ba)^{m/2}$, only. 
Consequently, $\abs{u}_a = \abs{v}_a$ for all $a \in \Sigma$.
\qed
\end{proof}

\begin{lemma}\label{lemma-parikh}
Let $G$ be an even Coxeter group, $a \in \Sigma$, and let $u, w$ be geodesics
such that $w a = u$ in $G$. Then there exists $\eps\in \{1,-1\}$
such that $|u| = |w|+\eps$ and $\abs{u}_a = \abs{w}_a+ \eps$. For all $b \in \Sigma\setminus\{a\}$ we have
$\abs{u}_b = \abs{w}_b$.
\end{lemma}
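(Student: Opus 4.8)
The plan is to reduce everything to \reflem{welldef}, which already tells us that in an even Coxeter group all geodesics representing the same group element have the same Parikh-image; the only work is to produce, in both the length-increasing and the length-decreasing case, a geodesic of the form "word $\cdot\,a$" and invoke that lemma. First I would fix the sign: since $w$ is a geodesic, $|w| = \Abs{w}$, and by \reflem{lem:geqzero} applied to $w$ and $a$ we have $\Abs{wa} = \Abs{w} + \eps$ for some $\eps \in \{1,-1\}$ (the two clauses of that lemma are exhaustive, as $\sig_w(a)$ is always a positive or a negative root). Because $u$ is a geodesic representing $wa$, this gives $|u| = \Abs{u} = \Abs{wa} = |w| + \eps$, which is the length assertion.

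If $\eps = 1$, then $\Abs{wa} = \Abs{w}+1 = |w|+1 = |wa|$, so the word $wa$ is itself a geodesic for the element $u = wa$. Thus $wa$ and $u$ are two geodesics representing the same element of $G$, and \reflem{welldef} yields $\abs{u}_c = \abs{wa}_c$ for every $c \in \Sigma$. Reading off the $a$-coordinate gives $\abs{u}_a = \abs{w}_a + 1 = \abs{w}_a + \eps$, and the remaining coordinates give $\abs{u}_b = \abs{w}_b$ for all $b \in \Sigma \setminus \{a\}$.

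If $\eps = -1$, I would use $a^2 = 1$ in $G$: from $wa = u$ we get $w = ua$ in $G$. Since $|w| = |u|+1$ and both $u$ and $w$ are geodesics, $\Abs{ua} = \Abs{w} = \Abs{u}+1 = |u|+1 = |ua|$, so now $ua$ is a geodesic for $w$. Applying \reflem{welldef} to the geodesics $ua$ and $w$ gives $\abs{w}_c = \abs{ua}_c$ for all $c \in \Sigma$, i.e. $\abs{w}_a = \abs{u}_a + 1$ and $\abs{w}_b = \abs{u}_b$ for $b \neq a$; rearranging, $\abs{u}_a = \abs{w}_a - 1 = \abs{w}_a + \eps$ and $\abs{u}_b = \abs{w}_b$, as required. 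There is no real obstacle here — all the substance sits in \reflem{welldef} (Tits' theorem on braid moves in the even case) and \reflem{lem:geqzero}; the only point requiring a little care is handling the length-decreasing case symmetrically by right-multiplying with $a$ so that \reflem{welldef} becomes applicable.
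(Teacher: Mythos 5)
Your proof is correct and follows essentially the same route as the paper's: both fix the sign $\eps$ via \reflem{lem:geqzero}, use $a^2=1$ to pass between $wa=u$ and $ua=w$, and then apply \reflem{welldef} to the pair of geodesics ($wa$ and $u$, resp.\ $ua$ and $w$) to compare Parikh-images. The only difference is that you spell out the two cases in more detail than the paper does.
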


\begin{proof}
By \reflem{lem:geqzero} there exists $\eps\in \{1,-1\}$
with $|u| = |w|+\eps$.  Moreover, 
since $a^2=1$, we have $ua=w$ and $wa=u$ in $G$. 
Hence, if $|w| = |u|+1$ (resp., $|u| = |w|+1$), then 
$ua$ and $w$ (resp., $wa$ and $u$)
are geodesics defining the same group element in $G$.
\reflem{welldef} implies that $\abs{ua}_c = \abs{w}_c$ 
(resp., $\abs{wa}_c = \abs{u}_c$)
for all $c \in \Sigma$.
This implies the conclusion of the lemma.
\qed
\end{proof}

\begin{theorem} \label{thm-parikh}
Let $G$ be an even Coxeter group.
For a given word $w \in \Sigma^*$, the Parikh-image of the shortlex normal 
form for $w$ can be computed in logspace. 
\end{theorem}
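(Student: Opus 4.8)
The plan is to combine \refthm{thm-geodesic-length} with the Parikh-analogue of \reflem{lem:x}, following almost verbatim the argument that yielded \refthm{thm:allcoxoderwas}. The key observation is that \reflem{lemma-parikh} plays the exact role for Parikh-images that \reflem{lem:geqzero} played for lengths: reading off a geodesic letter-by-letter, each step changes the count $\abs{\cdot}_a$ of the letter just appended by $\pm 1$ (the same sign as the length change) and leaves all other letter-counts unchanged. Since by \reflem{welldef} the Parikh-image depends only on the group element, this gives a well-defined target and a clean inductive handle.

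Concretely, first I would set up the same extended group $G' = G * (\Z/2\Z)$ with a fresh generator $x$, $x^2 = 1$, as in the paragraph before \reflem{lem:x}; note $G'$ is again an even Coxeter group since the only new relation is $x^2=1$. Then I would prove a Parikh-version of \reflem{lem:x}: for $w \in G$ with $\sig_w(x) = \sum_{b \in \Sig'} \lambda_b b$, the coefficient $\lambda_b$ (for $b \in \Sig$) being nonzero detects exactly whether $b$ occurs in the shortlex normal form — this is \reflem{lem:x} itself — but more is true in the even case. Actually the cleanest route avoids $x$ entirely: since \reflem{lemma-parikh} directly describes how $\abs{\cdot}_b$ evolves along the left-to-right scan of $w = a_1 \cdots a_k$, I would simply run the same loop as in the proof of \refthm{thm-geodesic-length}, but maintaining a vector: initialize $P := 0 \in \Z^\Sigma$, and for $i = 1$ to $k$, test via \reflem{lemma-main} whether $\sig_{a_1 \cdots a_{i-1}}(a_i) \ge 0$; if so set $P_{a_i} := P_{a_i} + 1$, else $P_{a_i} := P_{a_i} - 1$; return $P$. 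Correctness is immediate from \reflem{lem:geqzero} (each prefix $a_1 \cdots a_i$ is handled as $wa = u$ with $w = a_1\cdots a_{i-1}$, $a = a_i$) together with \reflem{lemma-parikh}, which guarantees the counts of the other letters are unaffected at each step, so that the invariant "$P$ is the Parikh-image of a geodesic for $a_1 \cdots a_i$" is preserved; at $i = k$ this is the Parikh-image of the shortlex normal form by \reflem{welldef}.

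For the complexity bound: each entry $P_a$ is an integer of absolute value at most $k = |w|$, hence representable in $O(\log |w|)$ bits, and the vector $P$ has a fixed number $|\Sigma|$ of coordinates, so the whole of $P$ fits in logarithmic space. The loop variable $i$ needs $O(\log |w|)$ bits, and each iteration invokes the logspace test of \reflem{lemma-main} as a subroutine; since logspace is closed under composition, the entire procedure runs in logspace. This is the same bookkeeping used in the proof of \refthm{thm-geodesic-length}, merely with an $O(1)$-dimensional integer accumulator instead of a single counter.

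There is essentially no hard new step here: the real work was already done in \reflem{lemma-main} (the number-theoretic/algebraic machinery for testing sign of the relevant matrix entries in logspace) and in \reflem{welldef} and \reflem{lemma-parikh} (the Tits-rewriting fact that, in an even Coxeter group, geodesics for the same element differ only by relations $(ab)^{m/2} = (ba)^{m/2}$, which preserve Parikh-images). The only point that deserves a sentence of care is the well-definedness of the output, i.e. that "the Parikh-image of the shortlex normal form" is what the algorithm computes — but this is exactly \reflem{welldef}, so that the geodesic implicitly constructed by the scan, whatever it is, has the same Parikh-image as the shortlex normal form. So I would expect the proof to be just a few lines: state the modified algorithm, cite \reflem{lem:geqzero} and \reflem{lemma-parikh} for correctness, cite \reflem{welldef} for well-definedness, and cite \reflem{lemma-main} for the logspace bound.
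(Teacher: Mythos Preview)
Your proposal is correct and matches the paper's proof almost exactly: the paper gives precisely the algorithm you describe (initialize a vector of counters, scan $w = a_1\cdots a_k$, and at step $i$ use the sign test from \reflem{lemma-main} to increment or decrement $\ell_{a_i}$), citing \reflem{lemma-parikh} for correctness and the logspace bound from \refthm{thm-geodesic-length}. Your brief detour through $G'$ is unnecessary (as you yourself note), but otherwise there is nothing to add.
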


\begin{proof}
\reflem{lemma-parikh} shows that the following straightforward 
modification of the logspace algorithm in (the proof of) \refthm{thm-geodesic-length}
computes the Parikh-image of the shortlex normal form for $w$ correctly.
Let  $w = a_1 \cdots a_k$ be the input word. 

 \medskip
\noindent
{\bf for all} $a \in \Gam$ {\bf do} $\ell_a := 0$;\\
{\bf for} $i=1$ {\bf to} $k$ {\bf do} \\
\hspace*{.5cm} {\bf if} $\sig_{a_1 \cdots a_{i-1}}(a_i) \geq 0$ {\bf then}\\
\hspace*{1cm} $\ell_{a_i} := \ell_{a_i}+1$\\
\hspace*{.5cm} {\bf else}\\
\hspace*{1cm} $\ell_{a_i} := \ell_{a_i}-1$\\
\hspace*{.5cm} {\bf endif} \\
{\bf endfor}\\
{\bf return} $[\, \ell_a\, ]_{a \in \Gam}$.
\qed
\end{proof}

\section{Mazurkiewicz traces and graph  groups}\label{mazu}

In the rest of the paper, we will deal with right-angled Coxeter
groups. As explained in Section~\ref{sec:cox}, a right-angled Coxeter
group can be specified by a finite undirected graph $(\Sigma,I)$.
The set $\Sigma$ is the generating set and the relations are 
$a^2=1$ for all $a \in \Sigma$ and $ab=ba$ for all $(a,b) \in I$.
Hence, $I$ specifies a partial commutation relation, and 
elements of a right-angled Coxeter group can be represented by partially
commutative words, also known as Mazurkiewicz traces. In this section, we 
will introduce some basic notions from the theory of 
Mazurkiewicz traces, see \cite{die90,dr95} for more details.

An \emph{independence alphabet} is a pair $(\Sig, I)$, where 
$\Sig$ is a finite set (or  \emph{alphabet}) and $I \sse \Sig \times \Sig$ 
is an irreflexive and symmetric relation, called
the \emph{independence relation}. Thus, $(\Sig, I)$ is a finite 
undirected graph.  The complementary relation $D = ( \Sig \times \Sig) \sm I$   
is called a \emph{dependence relation}. It is reflexive and symmetric. 
We extend $(\Sig, I)$ to a graph $(\Gam, I_\Gamma)$, where
$\Gam = \Sig \cup \ov \Sig$ with $\Sig \cap \ov \Sig = \emptyset$,
and $I_\Gamma$ is the minimal independence relation with $I \sse I_\Gamma$ 
and such that $(a,b)\in I_\Gamma$ implies $(a,\ov b)\in I_\Gamma$.
The independence alphabet $(\Sig, I)$ defines a 
\emph{free partially commutative monoid} (or {\em trace monoid}) $M(\Sig, I)$ 
and a \emph{free partially commutative group} $G(\Sig, I)$ by:
\begin{align*}
M(\Sig, I) &= \Sig^*/ \set{ab = ba}{(a,b) \in I}, \\ 
G(\Sig, I) &= F(\Sig)/ \set{ab = ba}{(a,b) \in I}.
\end{align*}
Free partially commutative groups are also known as 
{\em right-angled Artin groups} or {\em graph groups}.
Elements of $M(\Sig, I)$ are called \emph{(\mazu) traces}. They have a unique description
as {\em dependence graphs}, which are 
node-labelled acyclic graphs defined as follows. Let $u = a_1\cdots a_n \in \Sig^*$ be a word. 
The vertex set of the dependence graph $\DG(u)$ 
 is $\{1,\ldots,n\}$ and vertex $i$
is labelled with $a_i \in \Sig$. There is an arc 
from vertex $i$ to $j$ if and only if $i<j$ and 
$(a_i,a_j) \in D$. Now, two words define the same trace in $M(\Sig,I)$
if and only if their dependence graphs are isomorphic.
A dependence graph is acyclic, so its transitive closure
is a labelled partial order $\prec$, which can be uniquely 
represented by its {\em Hasse diagram}:
There is an arc from $i$ to $j$ in the Hasse diagram, if 
$i \prec j$ and there does not exist $k$ with $i \prec k \prec j$.

A trace $u\in M(\Sigma,I)$ is a \emph{factor} of $v\in M(\Sigma,I)$, 
if $v \in M(\Sigma,I) u M(\Sigma,I)$. 
The set of letters occurring in a trace $u$ is denoted by $\alp(u)$. 
The independence relation $I$ is extended to traces by letting $(u,v)\in I$, if 
$\alp(u) \times \alp(v) \sse I$. We also write 
$I(a) = \set{b \in \Sig}{(a,b) \in I}$. 
A trace $u$ is called a \emph{prime} if $\DG(u)$  has exactly one maximal element.
Thus, if $u$ is a prime, then we can write $u$ as $u=va$ in $M(\Sig,I)$,
where $a\in \Sig$ and $v \in M(\Sig,I)$
are uniquely defined. Moreover, this property characterizes primes. 
A \emph{prime prefix} of a trace $u$ is a prime trace $v$ such that
$u = vx$ in $M(\Sigma,I)$ for some trace $x$.
We will use the following simple fact. 

\begin{lemma} \label{lemma-prim-prefixes}
Let $(\Sigma,I)$ be a fixed independence relation. There is a logspace
transducer that on input $u  \in M(\Sigma,I)$ outputs a list
of all prime prefixes of $u$.
\end{lemma}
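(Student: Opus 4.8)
The plan is to exploit the well-known combinatorial characterization of prime prefixes of a trace $u$: they are in bijection with the vertices of the dependence graph $\DG(u)$, namely the prefix associated with a vertex $i$ is the trace whose dependence graph is the down-set (order ideal) $\{j : j \preceq i\}$ of $i$ in $\DG(u)$. Concretely, if $u = a_1 \cdots a_n$ as a word, then for each position $i \in \{1,\ldots,n\}$ the prime prefix $p_i$ determined by $i$ is read off by scanning $u$ from left to right and keeping exactly those positions $j$ that lie below $i$ in the partial order $\prec$; this subword, taken as a trace, is a prime with maximal vertex $i$, and every prime prefix arises this way for some $i$. So the transducer will loop over $i = 1, \ldots, n$, and for each $i$ output one prime prefix.

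First I would observe that the relation $j \preceq i$ can be tested in logspace: $j \preceq i$ holds iff there is a directed path from $j$ to $i$ in $\DG(u)$, and reachability in a directed acyclic graph is in $\mathsf{NL} \subseteq \mathsf{LOGSPACE}$ — but in fact we do not even need the full strength of $\mathsf{NL}$ here, because $(\Sigma,I)$ is fixed and the only data varying is the word $u$; one can decide $j \prec i$ by the standard criterion that there exist positions $j = k_0 < k_1 < \cdots < k_r = i$ with $(a_{k_{t}}, a_{k_{t+1}}) \in D$ for all $t$, and such a path can be found greedily, or simply by invoking $\mathsf{NL}$-reachability on the explicitly-given dependence graph whose vertex set has size $|u|$. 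Either way the predicate ``$j$ lies in the down-set of $i$'' is logspace-computable. Then, for a fixed $i$, the transducer rescans $u$ from left to right and writes out $a_j$ exactly for those $j$ with $j \preceq i$; the resulting word represents the desired prime prefix $p_i$, and by construction its dependence graph has $i$ as its unique maximal vertex, so it is prime.

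The remaining points are bookkeeping: (i) correctness, i.e. that $\{p_i : 1 \le i \le n\}$ is exactly the set of prime prefixes of $u$ — this is immediate from the characterization of primes recalled just before the lemma (a prime has a unique maximal vertex) together with the fact that prefixes of $u$ correspond to down-closed vertex sets of $\DG(u)$, and a prefix is prime iff that down-set has a single maximal element, i.e. is the principal down-set of some $i$; (ii) the output is a \emph{list} of traces, which we produce by emitting the words $p_1, p_2, \ldots, p_n$ separated by a delimiter, writing the index $i$ alongside each if desired; duplicates (two positions inducing isomorphic dependence graphs) may be emitted or suppressed — suppression also costs only logspace since one can, before emitting $p_i$, check whether some earlier $p_j$ with $j < i$ yields the same trace, and trace equality is logspace-testable. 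The only mild subtlety, and the one I would be most careful about, is ensuring every intermediate quantity (the counters $i$, $j$, the path search for $j \preceq i$) fits in $\Oh(\log |u|)$ space and that the output head moves only left to right; since we simply nest two scans over $u$ with a logspace reachability test in the middle, and never revisit already-written output, this is straightforward. Hence the whole computation is a composition of logspace-computable steps, which is logspace, proving the lemma. \qed
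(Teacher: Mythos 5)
Your overall construction coincides with the paper's: enumerate the positions $i$ of $u=a_1\cdots a_n$ and, for each $i$, output the labels of all $j\le i$ lying below $i$ in $\DG(u)$, using that the prime prefixes are exactly the principal down-sets of $\DG(u)$. The genuine gap is in the one step that actually needs an argument, namely that the predicate ``$j\preceq i$'' is decidable in \emph{deterministic} logspace. You justify it first by writing $\mathsf{NL}\subseteq\mathsf{LOGSPACE}$, which is false as stated: only $\mathsf{L}\subseteq\mathsf{NL}$ is known, and whether directed reachability (even in DAGs) lies in deterministic logspace is a well-known open problem, so ``invoking $\mathsf{NL}$-reachability on the explicitly-given dependence graph'' does not yield the logspace transducer the lemma asserts. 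Your fallback, that ``such a path can be found greedily'', is not substantiated by any algorithm or invariant, so as written the crucial step is unproven.

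The missing idea is exactly the one the paper supplies: because $(\Sigma,I)$ is fixed, a path from $j$ to $i$ in $\DG(u)$ exists if and only if one of length at most $|\Sigma|$ exists. Indeed, on a shortest path $j=k_0<k_1<\cdots<k_r=i$ any two non-consecutive vertices must carry independent labels (otherwise there is a shortcut edge), and since $D$ is reflexive a repeated label also permits a shortcut; hence the labels along a shortest path are pairwise distinct and $r\le|\Sigma|$. One can therefore decide $j\preceq i$ deterministically by enumerating all increasing sequences of at most $|\Sigma|$ positions ending in $i$, which requires only a constant number of $O(\log|u|)$-bit counters. (Alternatively, a single left-to-right scan from $j$ to $i$ maintaining the set $A\subseteq\Sigma$ of letters labelling some position already known to be reachable from $j$ --- adding $a_k$ to $A$ whenever $a_k$ is dependent on some letter of $A$ --- decides the same predicate in constant space beyond the scan pointer; but this argument, too, is absent from your write-up.) With either justification inserted, the rest of your proof (the characterization of prime prefixes, the nested scans, emitting the list) is correct and matches the paper; the duplicate-suppression discussion is unnecessary, since the paper simply outputs the list as is.
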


\begin{proof}
The prime prefixes of $u$ correspond to the downward-closed subsets
of the dependence graph $\DG(u)$ that have a unique maximal element.
Assume that $u = a_1 a_2 \cdots a_n$ with $a_i \in \Sigma$. 
Our logspace transducer works in $n$ phases. In the $i$-th phase
it outputs the sequence of all symbols $a_j$ ($j \leq i$) such that
there exists a path in $\DG(u)$ from $j$ to $i$. 
Note that there exists a path from $j$ to $i$ in $\DG(u)$ if and only
if there is such a path of length at most $|\Sigma|$. Since $\Sigma$
is fixed, the existence of such a path can be checked in logspace
by examining all sequences $1 \leq i_1 < i_2 < \cdots < i_k = i$ with $k \leq
|\Sigma|$. Such a sequence can be stored in logarithmic space since
$|\Sigma|$ is a constant.
\qed
\end{proof}
We use standard notations from the theory of rewriting systems, cf \cite{bo93springer}.
Let $M = M(\Sig, I)$. A  \emph{trace rewriting system} is a finite set  
of rules $S \sse M\times M$. A rule is often written in the form
$\ell \ra r$. The system  $S$ defines a one-step rewriting relation
$\RAS{}{S} \; \sse M \times M$ by 
$x\RAS{}{S}y$ if there exist $(\ell,r)\in S$ and $u,v \in M$ with $x =
u\ell v$ and $y = ur v$ in $M$.
By $\RAS{*}{S}$, we denote the reflexive and transitive closure of 
$\RAS{}{S}$. The set $\IRR(S)$ denotes the set of traces to which no rule of $S$ 
applies. 
If $S$ is confluent and terminating, then
for every $u \in M$ there is a unique $\wh u \in \IRR(S)$ 
with $u \RAS{*}{S} \wh u$, and 
$\IRR(S)$ is a set of normal forms for the quotient monoid $M/S$. 
If, in addition,  $S$ is length-reducing (i.e., $|\ell| > |r|$ for all
$(\ell,r) \in S$), then
$\Abs{\pi(u)} = \abs{\wh u}$ for the canonical homomorphism $\pi: M \to M/S$.

\begin{example}\label{ex:gg}
The system $S_G = \set{a \ov a \ra 1}{a \in \Gam}$ is (strongly)  
confluent and length-reducing over $M(\Gam, I_\Gam)$ \cite{die90}. The quotient 
monoid $M(\Gam, I_\Gam)/S_G$ is the graph group $G(\Sig, I)$.
\end{example}
By \refex{ex:gg} elements in  {graph group}s have a unique description
as {\em dependence graphs}, too. 
A trace belongs to $\IRR(S_G)$ if and only if it does not contain a factor 
$a \ov a $ for $a \in \Gamma$. In the dependence graph, this means that the Hasse diagram does not contain
any arc from a vertex labeled $a$ to a vertex labeled $\ov a$ with $a \in \Gam$.  
Moreover, a word $u \in \Gamma^*$ represents a trace from 
$\IRR(S_G)$ if and only if $u$ does not contain a factor of the form
$a v \ov a$ with $a \in \Gam$ and $\alpha(v) \subseteq I(a)$.

\section{Right-angled {C}oxeter groups}\label{sec:c}
Some of the results on right-angled Coxeter groups 
in this section are covered by more general statements 
in \refsec{sec:cox}. However, the former section used quite involved tools from 
computational algebra and an advanced theory of Coxeter groups. In contrast the
results we prove here on right-angled Coxeter groups are purely combinatorial. 
Hence we can give simple and elementary proofs which makes this section fully self-contained.  
Moreover, in contrast to the general case, for the right-angled case
we will succeed in computing shortlex normal forms in logspace.

Recall that a \emph{right-angled Coxeter group} is specified by a
finite undirected graph $(\Sigma,I)$, i.e., an independence alphabet.
The set $\Sigma$ is the generating set and the relations are 
$a^2=1$ for all $a \in \Sigma$ and $ab=ba$ for all $(a,b) \in I$.
We denote this right-angled Coxeter group by $C(\Sig, I)$.
Similarly to the graph group $G(\Sig, I)$, the right-angled Coxeter group
$C(\Sig,I)$ can be defined by a (strongly) confluent and length-reducing trace
rewriting system (this time on $M(\Sigma,I)$ instead of $M(\Gamma,I_\Gamma)$). Let 
$$
S_C = \{ a^2 \to 1 \mid a \in \Sigma \} .
$$
Then $S_C$ is indeed (strongly) confluent and  length-reducing on $M(\Sigma,I)$ and 
the quotient $M(\Sigma,I)/S_C$ is $C(\Sigma,I)$.
Hence we have two closely related (strongly) confluent and 
length-reducing trace rewriting systems: $S_G$ defines
the graph group $G(\Sig, I)$ and $S_C$ defines
the \raCG $C(\Sig, I)$. Both systems  define unique normal forms
of geodesic length:  $\widehat u \in M(\Gam, I_\Gamma)$ for $S_G$ and $\widehat u \in M(\Sig, I)$ for $S_C$.
Note that there are no explicit commutation rules as they are
\emph{built-in} in trace theory.
There is a linear time algorithm for computing
$\widehat u$; see \cite{die90} for a more general result of this type. 
 
It is well known that a graph group $G(\Sigma,I)$ can be embedded into
a \raCG \cite{hw99}. For this, one has to duplicate each letter from $\Sigma$.
Formally, we can take the \raCG $C(\Gam, I_\Gamma)$ (in which $\ov a$ 
does not  denote the inverse of $a$).
Consider the mapping $\phi(a) = a\ov a$ {}from $\Gam$ to $\Gam^*$.
Obviously, $\phi$ induces a homomorphism from $G(\Sig, I)$ to the Coxeter group 
$C(\Gam, I_\Gamma)$. As $\IRR(S_G) \subseteq M(\Gamma,I_\Gamma)$ is mapped to 
$\IRR(S_C) \sse M(\Gam, I_\Gamma)$,  we recover the well-known fact that 
$\phi$ is injective. Actually we see more. Assume that 
$\wh w$ is the shortlex normal form of some $\phi(g)$. Then 
replacing in $\wh w$
factors $a\ov a$ with $a$ and replacing factors $\ov a a$
with $\ov a$ yields a logspace reduction of the problem of computing shortlex normal forms
in graph groups to the problem of computing shortlex normal forms
in \raCG{}s. Thus, for our purposes it is enough to calculate 
shortlex normal forms for {\raCG}s of type $C(\Sig, I)$ in logspace.
For the latter, it suffices to compute in logspace  on input $u \in \Sig^*$ 
some trace (or word)  $v$ such that $u = v$ in $C(\Sig, I)$
and $\abs{v} = \Abs u$. Then, the shortlex normal form for $u$ is the
lexicographic normal form of the trace $v$, which can be easily
computed in logspace from $u$. 

A  trace in $M(\Sig, I)$ is called a {\em Coxeter-trace}, if it does not have any 
factor $a^2$ where $a \in \Sig$.
It follows that every element in $C(\Sig, I)$ has a unique
representation as a \Ct. Let $a \in \Sig$. A trace $u$ is called $a$-short, if during the derivation
$u \RAS*{S_C} \wh u \in \IRR(S_C)$ the rule $a^2 \ra 1$ is not applied. Thus, 
$u$ is  $a$-short \IFF the number 
of occurrences of the letter $a$ is the same in the trace $u$ and 
its \Ct $\wh u$.
We are interested in the set of letters which survive the 
reduction process.  By $\wh\alp(u) = \alp(\wh u)$ we denote the 
alphabet of the unique \Ct $\wh u$ with $u = \wh u$ in $C(\Sig, I)$. 
Here is a crucial observation: 

\begin{lemma}\label{lem:otto}
A trace $u$ is $a$-short \IFF $u$ has no 
factor $ava$ such that $\wh \alp( v) \sse I(a)$. 
\end{lemma}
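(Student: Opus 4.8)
The plan is to prove both implications by contraposition, using the confluence and length-reduction of $S_C$ together with the trace-combinatorial characterisation of factors. Recall that $u$ is $a$-short iff $|\wh u|_a = |u|_a$, i.e.\ no instance of the rule $a^2 \to 1$ is ever applied along a (equivalently, any, by confluence) reduction $u \RAS{*}{S_C} \wh u$.

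First I would handle the easy direction: if $u$ \emph{has} a factor $ava$ with $\wh\alp(v) \sse I(a)$, then $u$ is not $a$-short. Write $u = x\,a\,v\,a\,y$ in $M(\Sigma,I)$. Since $\wh\alp(v) = \alp(\wh v)$ and $\alp(\wh v) \sse \alp(v)$, we have in particular that every letter of $\wh v$ commutes with $a$, so in $M(\Sigma,I)$ we can rewrite $a\,v\,a \RAS{*}{S_C} a\,\wh v\,a = a\,a\,\wh v \RAS{}{S_C} \wh v$, applying the rule $a^2\to 1$ at least once. Extending this reduction to all of $u$ and then reducing to normal form, confluence guarantees the same $\wh u$ is reached; since some application of $a^2\to 1$ occurred and the count of $a$'s can only decrease, $|\wh u|_a < |u|_a$. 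Hence $u$ is not $a$-short.

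The converse is the main obstacle and is where trace theory does the real work. Suppose $u$ is not $a$-short, so some reduction $u \RAS{*}{S_C} \wh u$ applies $a^2\to 1$; I want to produce a factor $ava$ of $u$ (as a trace!) with $\wh\alp(v)\sse I(a)$. The natural approach is to look at the dependence graph $\DG(u)$ and consider the two occurrences of $a$ that get cancelled by the \emph{first} application of $a^2 \to 1$ along a suitable (e.g.\ leftmost-innermost) reduction; call the corresponding vertices $i \prec j$ in $\DG(u)$. I would argue that at the moment this rule fires, these two $a$'s have become adjacent in the trace, which means that whatever sat strictly between them in $\DG(u)$ has been entirely cancelled by earlier rewriting steps, none of which touched $i$ or $j$. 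Letting $v$ be the factor of $u$ strictly between the $i$-th and $j$-th occurrence (i.e.\ the subtrace induced by vertices $k$ with $i \prec k \prec j$, together with the ``parallel'' vertices that are $\prec j$ but incomparable to $i$ — care is needed here with the trace structure), one shows $u = x\,a\,v\,a\,y$ and that $v \RAS{*}{S_C} 1$, whence $\wh v = 1$ and vacuously $\wh\alp(v) = \es \sse I(a)$. The delicate point is that this gives $\wh\alp(v) = \es$, which is stronger than needed but only covers the case where the cancelled block reduces completely; to get the general statement I would instead not demand that $v$ itself fully cancels, but rather isolate a \emph{minimal} bad factor. Concretely: among all factorisations $u = x\,a\,w\,a\,y$ with the two displayed $a$'s destined to be cancelled against each other, pick one with $|w|$ minimal; then argue that no letter of $\wh w$ can depend on $a$, since such a letter could never move out from between the two $a$'s and would block their cancellation — contradicting that they do cancel. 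This yields $\alp(\wh w) \sse I(a)$, i.e.\ $\wh\alp(w)\sse I(a)$, as required.

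The step I expect to be genuinely technical is the bookkeeping that turns ``the two $a$'s get cancelled against each other in some reduction'' into a statement purely about the Hasse diagram / dependence graph of $u$ — i.e.\ showing that being cancellable is equivalent to the existence of an order-theoretic ``channel'' between the two occurrences all of whose intermediate labels lie in $I(a)$ after reduction. This is a standard type of argument in the theory of trace rewriting (Levi's lemma, projection to $\{a\}$-components, and the fact that $S_C$ restricted to a fixed letter behaves like cancellation in a free monoid), so I would invoke \cite{die90} for the confluence and Levi-lemma machinery and keep the combinatorial core short, rather than reproving termination or confluence of $S_C$.
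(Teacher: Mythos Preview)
Your easy direction is fine and matches the paper. The hard direction, however, has real gaps.

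Your first attempt fixes the first $a$-cancellation at vertices $i,j$ and asserts that the intermediate factor $v$ satisfies $\wh v = 1$. This is false for the word-interval $v = a_{i+1}\cdots a_{j-1}$: take $u = aca$ with $(a,c)\in I$; then $i=1$, $j=3$, $v=c$, and $\wh v = c \neq 1$. Your parenthetical fix (take $v$ to be the vertices with $i \prec k \prec j$, ``together with the parallel vertices\ldots'') is muddled --- once you throw in parallel vertices you are essentially back to the word-interval, and the claim fails again.

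Your second attempt (minimise $|w|$ in $u = xaw ay$ and argue that a surviving $c\in\wh\alpha(w)$ with $(c,a)\in D$ would ``block'' the cancellation) is the right intuition but is precisely where the work lies, and you do not do it. The obstacle is this: a $c$-vertex that survives when you reduce $w$ \emph{in isolation} may nonetheless be cancelled in a reduction of the whole trace $u$, against a partner $c$-vertex lying \emph{outside} $w$. So ``it could never move out'' is not evidently true; you must rule out such cross-cancellations, and appealing vaguely to Levi's lemma or confluence does not do that for you.

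The paper's argument sidesteps all of this by a different induction. It fixes the word-interval $v = a_{i+1}\cdots a_{j-1}$ and proves $\wh\alpha(v)\subseteq I(a)$ by induction on $|u|$, looking not at the first $a$-cancellation but at the \emph{very first} rewriting step, say cancelling $b$-vertices $k<\ell$. If that first step \emph{is} the $(i,j)$-cancellation, then convexity of $\{i,j\}$ in $\DG(u)$ forces $\alpha(v)\subseteq I(a)$ directly. Otherwise one passes to the shorter trace $u'$ and compares $\wh\alpha(v)$ with $\wh\alpha(v')$, where $v'$ is $v$ with $a_k,a_\ell$ deleted as appropriate. The only cases where these alphabets can differ are the ``straddling'' cases $i<k<j<\ell$ or $k<i<\ell<j$; but then convexity of $\{k,\ell\}$ forces $(b,a)\in I$, and since $v$ and $v'$ differ only by a single $b$ commuting past the relevant endpoint, one gets $\wh\alpha(v)\subseteq \wh\alpha(v')\cup\{b\}\subseteq I(a)$. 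This positional case analysis in word order is the idea you are missing.
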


\begin{proof}
 If $u$ contains a factor $ava$ such that $\wh \alp( v) \sse I(a)$,
then $u$ is clearly not $a$-short. We prove the other direction
by induction on the length of $u$.  
Write $u = a_1 \cdots a_n$ with $a_i\in \Sig$. We identify $u$ with its dependence graph  $\DG(u)$ which has vertex 
set $\oneset{1 \lds n}$. Assume that $u$ is  not $a$-short. Then,
during the derivation $u \RAS{*}{S_C} \wh u$, for a first time a vertex $i$ with label $a_i= a$ 
is canceled with vertex $j$ with label $a_j= a$ and $i <j$.
It is enough to show that $\wh\alpha(a_{i+1} \cdots a_{j-1}) \sse I(a)$.
If the cancellation of $i$ and $j$  happens in the first step of the
rewriting process, then $\alpha(a_{i+1} \cdots a_{j-1}) \subseteq
I(a)$ and we are done. So, let the first step cancel 
vertices $k$ and $\ell$ with labels $a_k = a_\ell= b$ and $k<\ell$.
Clearly, $\oneset{i,j}\cap \oneset{k,\ell} = \es$.
The set $\wh\alpha(a_{i+1} \cdots a_{j-1})$ can change, only if 
either $i<k<j< \ell$ or $k<i< \ell<j$. However in both cases we must have
$(b,a)  \in I$, and  we are done by induction.
\qed
\end{proof}
In the right-angled case, 
the standard geometric  representation (see \eqref{eq-geo-repr}) 
$\sig: C(\Sig, I) \to \GL(n,\Z)$ (where $n = |\Sigma|$)
can be defined as follows, where again we write 
$\sigma_a$ for the mapping $\sigma(a)$:
\begin{eqnarray*}
\sig_a(a) & = & -a, \\
\sig_a(b) & = & \begin{cases}
      b  & \text{ if $(a,b) \in I$,} \\
  b +2a  & \text{ if $(a,b) \in D$ and $a \neq b$.}
\end{cases}
\end{eqnarray*}
In this definition, $a,b$ are letters. We identify  $\Z^n = \Z^{\Sig}$ 
and vectors from $\Z^n$ are written 
as formal sums $\sum_{b} \lambda_b b$. 
One can easily verify that $\sigma_{ab}(c) = \sigma_{ba}(c)$ for
$(a,b) \in I$ and $\sigma_{aa}(b) = b$. 
Thus, $\sigma$ defines indeed a homomorphism from $C(\Sigma,I)$ to
$\GL(n,\Z)$ (as well as homomorphisms from $\Sig^*$ and {}from  $M(\Sigma,I)$ to
$\GL(n,\Z)$).
Note that if $w = uv$ is a trace and $(b,v) \in I$ for a symbol $b$,
then $\sigma_w(b) = \sigma_u(b)$.
The following proposition is fundamental 
for understanding how the internal structure of $w$ is reflected by letting $\sig_w$ act on letters
(called \emph{simple roots} in the literature). 
For lack of a reference for this variant (of a  well-known general fact) and since the 
proof is rather easy in the right-angled case
(in contrast to the general case), we give a proof, which is purely 
combinatorial.

\begin{proposition}\label{prop:hugo}
Let $wd$ be a \Ct, $\sig_w(d) = \sum_{b} \lambda_b b$ and $wd = udv$ where 
$ud$ is prime and $(d,v) \in I$.
Then it holds: 
\begin{enumerate}[(1)]
\item $\lambda_b \neq 0 \iff b \in \alp(ud)$. Moreover,  $\lambda_b > 0$ for all $b \in \alp(ud)$.
\item Let $b,c \in \alp(ud)$, $b \neq c$, and assume that 
the first $b$ in $\DG(ud)$ appears before the first $c$ in $\DG(ud)$.  
Then we have $\lambda_b > \lambda_c >0$. 
\end{enumerate}
\end{proposition}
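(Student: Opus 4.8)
The plan is to prove both statements simultaneously by induction on $|wd| = |w|+1$, since statement (2) about the ordering of coefficients is needed to carry the induction for statement (1) through the "commuting past" case.

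First I would set up the base case: if $w = 1$, then $ud = d$, $v = 1$, and $\sig_d(d) = d$ — wait, that is $-d$. Hmm. Actually note $\sig_w(d)$ here is evaluated \emph{before} multiplying by $d$: we have $wd$ a \Ct\ and we look at $\sig_w(d)$, not $\sig_{wd}(d)$. So for $w=1$ we get $\sig_1(d) = d$, i.e. $\lambda_d = 1 > 0$ and all other $\lambda_b = 0$, and $\alp(ud) = \alp(d) = \{d\}$; statement (2) is vacuous. Good.

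For the inductive step, write $w = aw'$ with $a \in \Sig$ a letter (the first letter of some geodesic word for $w$, equivalently a minimal vertex of $\DG(w)$). Then $\sig_w(d) = \sig_a\bigl(\sig_{w'}(d)\bigr)$. By induction applied to $w'd$ (which is still a \Ct, being a factor of the \Ct\ $wd$), writing $\sig_{w'}(d) = \sum_b \mu_b b$, I know: $\mu_b \neq 0 \iff b \in \alp(u'd)$ where $w'd = u'dv$ with $u'd$ prime, all such $\mu_b > 0$, and the ordering property (2) holds for $u'd$. Now I apply $\sig_a$ coordinatewise using its explicit definition: $\sig_a$ fixes every letter $b$ with $(a,b)\in I$ and every letter $b \neq a$ with $(a,b)\in D$ gets $b + 2a$, while $a \mapsto -a$. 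So $\lambda_b = \mu_b$ for $b \neq a$, and $\lambda_a = -\mu_a + 2\sum_{b \in \alp(u'd),\, (a,b)\in D,\, b\neq a} \mu_b$. The key case analysis is then: (i) $a \in \alp(u'd)$ already — then $a$ commutes with nothing "after" it that would let it cancel, and one checks $\alp(ud) = \alp(u'd)$, the first occurrence of $a$ stays first, and I must show $\lambda_a = -\mu_a + 2\sum(\cdots) > 0$, in fact that it \emph{becomes the largest} coefficient; here I use that $a$ was the first letter so by (2) $\mu_a$ was already the largest among $\mu_b$, and since $\DG(u'd)$ is prime/connected-to-$d$, there is at least one $b$ with $(a,b)\in D$ contributing $2\mu_b$, forcing $-\mu_a + 2\sum \geq -\mu_a + 2\mu_a = \mu_a > 0$, hmm — I need it $>\mu_a$ actually wait, I need $\lambda_a$ to be strictly the largest; I should be more careful and perhaps show $\lambda_a \geq \mu_a + 2\mu_{c}$ for the second-occurring letter $c$ — this is exactly the delicate point. (ii) $a \notin \alp(u'd)$: then either $a \notin \alp(wd)$ relevantly (i.e. $a$ commutes past everything and $ua$... no) — rather, $\alp(ud) = \alp(u'd) \cup \{a\}$ and $\lambda_a = 2\sum_{b} \mu_b > 0$ which I need to compare with the old smallest coefficient; since $a$ is now the first letter, I need $\lambda_a$ to be the largest, and $\lambda_a = 2\sum_b \mu_b \geq 2\mu_{\text{max}} > \mu_{\text{max}}$, good.

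\textbf{The main obstacle} I expect is the bookkeeping in statement (2) under the "$a$ already occurs in $\alp(u'd)$" case: I must verify that prepending $a$ makes $\lambda_a$ strictly exceed every other $\lambda_b = \mu_b$, and more generally that the strict chain of inequalities $\lambda_{b_1} > \lambda_{b_2} > \cdots > 0$ following first-occurrence order is preserved when the formerly-first letter $a$ gets re-weighted. The cleanest route is probably to prove a slightly stronger invariant — something like: if $b$ occurs before $c$ in $\DG(ud)$ and $(b,c)\in D$ then $\lambda_b \geq \lambda_c + (\text{something})$, or simply track that $\lambda_b - \lambda_c$ is itself a nonneg combination of earlier $\mu$'s — so that the induction feeds itself. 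I would also need to confirm that $ud$ remaining prime and the reduction $wd = udv$, $(d,v)\in I$ behaves well when passing from $w$ to $w' $, i.e. that the "$v$ part" only grows by letters independent of $d$, which follows from the trace-theoretic fact that in a \Ct\ the prime prefix ending at the (unique, since $wd$ prime-up-to-$v$) maximal $d$ is determined by reachability in $\DG(wd)$. These are routine once the right invariant is fixed, so essentially all the work is in choosing the strengthened statement for (2).
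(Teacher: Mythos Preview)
Your overall architecture---simultaneous induction on both statements, peeling off a minimal letter $a$, and splitting on whether $a\in\alp(u'd)$---matches the paper. But case~(i) contains a genuine error that blocks the induction.

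You write: ``here I use that $a$ was the first letter so by (2) $\mu_a$ was already the largest among $\mu_b$.'' This is backwards. The inductive hypothesis (2) applies to $u'd$, not to $ud$. When you pass from $u=au'$ to $u'$ you \emph{delete} the first $a$; the first occurrence of $a$ in $\DG(u'd)$ is the \emph{second} $a$ of $ud$, which is not minimal. So (2) gives you no reason to think $\mu_a$ is large. The paper's ``crucial point'' is exactly the opposite observation: for the relevant $c$ (a Hasse-successor of position~1, so that $(a,c)\in D$ and the $c$-vertex becomes minimal in $u'd$), the first $c$ in $\DG(u'd)$ appears \emph{before} the first $a$, hence $\mu_c>\mu_a$ by induction. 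Then
\[
\lambda_a \;\geq\; 2\mu_c-\mu_a \;=\; \mu_c+(\mu_c-\mu_a) \;>\; \mu_c,
\]
which is what you need. Your attempted bound $-\mu_a+2\sum(\cdots)\geq -\mu_a+2\mu_a=\mu_a$ only yields $\lambda_a\geq\mu_a$, and as you yourself notice, that is not enough.

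Two smaller points. In case~(ii) you tacitly assume $a$ lands in $\alp(ud)$, but if $a$ is independent of every letter of $\alp(u'd)$ (hence of $d$) then $a$ belongs to the $v$-part, $\lambda_a=0$, and $\alp(ud)=\alp(u'd)$; this sub-case is trivial but must be stated. (The paper sidesteps it by inducting on $|u|$ rather than $|w|$, so the peeled letter is always in the prime part.) Also, your bound $\lambda_a\geq 2\mu_{\max}$ in case~(ii) is unjustified: the letter realising $\mu_{\max}$ might be independent of $a$ and absent from the sum. After reducing to a $c$ with $(a,c)\in D$, the correct and sufficient bound is simply $\lambda_a\geq 2\mu_c>\mu_c$.
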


\begin{proof}
We prove both statements of the lemma by induction on $\abs{u}$.
For $\abs{u}= 0$ both statements are clear. Hence, let
$u = a u'$ and $\sig_{u'}(d)=  \sum_{b} \mu_b b$.
Thus,  
$$\sig_{u}(d)=  \sum_{b} \lambda_b b = \sig_{a}(\sum_{b} \mu_b b)
= \sum_{b} \mu_b \sig_{a}(b).$$
Note that $\mu_b =  \lambda_b$ for all $b \neq a$.
Hence, by induction  $\lambda_b =  0$ for all $b \notin \alp(ud)$
and $\lambda_b > 0$ for all $b \in \alp(ud) \setminus \{a\}$.

Let us now prove (2) for the trace $u$ (it implies $\lambda_a > 0$ and
hence (1)).
Consider $b,c \in \alp(ud)$, $b \neq c$, such that the first $b$ in
$\DG(ud)$  appears before the first $c$ in $\DG(ud)$. Clearly, this implies $c \neq a$. For $b \neq a$
we obtain that the first $b$ in $\DG(u'd)$ appears before the first $c$ in
$\DG(u'd)$. Hence, by induction we get $\mu_b > \mu_c > 0$. Claim (2) follows
since $b \neq a \neq c$ implies
$\mu_b =  \lambda_b$ and $\mu_c =  \lambda_c$.

Thus, let $a=b$. As there is path from the first $a$ to every $c$ in 
$\DG(ud)$ we may replace $c$ by the first letter we meet on such a path.
Hence we may assume that $a$ and $c$ are dependent. 
Note that $a \neq c$ because $u$ is a \Ct. 
Hence, $\lambda_c = \mu_c > 0$ and 
it is enough to show $\lambda_a > \mu_c$. But 
$\lambda_a \geq 2 \mu_c - \mu_a$ by the definition of $\sigma_a$.
If $\mu_a= 0$, then $\lambda_a \geq 2 \mu_c$, which implies 
$\lambda_a > \mu_c$, since $\mu_c > 0$.
Thus, we may assume $\mu_a> 0$. By induction, we get $a \in \alp(u'd)$. Here comes the
crucial point: the first $c$ in $\DG(u'd)$ must appear before the first $a$
in $u'd$. Thus, $\mu_c > \mu_a$ by induction, which finally implies
$\lambda_a \geq 2 \mu_c - \mu_a = \mu_c + (\mu_c - \mu_a) > \mu_c$.
 \qed
\end{proof} 

\begin{corollary}\label{cor:nixoderwas}
Let $C(\Sig,I)$ be a fixed \raCG. 
Then on input $w \in \Sig^*$ we can calculate in logspace 
the alphabet $\wh\alpha(w)$ of the corresponding \Ct $\wh w$. 
\end{corollary}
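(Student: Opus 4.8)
The plan is to reduce the computation of $\wh\alpha(w)$ to a polynomial number of sign checks on integer entries of matrices in $\GL(n,\Z)$, exactly as was done for general Coxeter groups in \refthm{thm:allcoxoderwas}, but using the much simpler integer-valued geometric representation $\sig$ available in the right-angled case together with \refprop{prop:hugo}. First I would pin down the connection between $\wh\alpha(w)$ and the representation: by \reflem{lem:otto} a letter $a$ survives the reduction $w \RAS{*}{S_C} \wh w$ precisely when $w$ has no factor $ava$ with $\wh\alpha(v)\sse I(a)$, and by \refprop{prop:hugo}(1) the nonvanishing pattern of $\sig_{w'}(d)$ for a prime prefix $w'd$ of (a \Ct\ of) $w$ exactly records the alphabet of that prime prefix. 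The cleanest route, mirroring \reflem{lem:x}, is: a letter $b\in\Sig$ lies in $\wh\alpha(w)$ if and only if there is a prime prefix of $\wh w$ whose last letter is $b$; and by \refprop{prop:hugo}(1) applied to the \Ct\ $\wh w = \wh{w'}\, b$, this last-letter condition is detectable from the sign (positivity) of the appropriate coordinate of $\sig_{\wh{w'}}(b)$.

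Next I would handle the fact that we are not handed a \Ct\ but an arbitrary word $w\in\Sig^*$. Here I would argue that one need not first reduce $w$: using \reflem{lemma-prim-prefixes} one can, in logspace, enumerate all prime prefixes $u$ of the trace represented by $w$ (together with the letter $d$ making $ud$ prime), and for each such prime prefix check whether $d\in\wh\alpha(ud)$ by testing the sign of the $d$-coordinate of $\sig_u(d)$ — more precisely, by \refprop{prop:hugo} combined with \reflem{lem:otto}, $d$ is cancelled inside $ud$ exactly when a certain coordinate vanishes or the relevant positivity fails, and otherwise $d$ contributes a surviving occurrence. Collecting the set of letters $d$ for which some prime prefix $ud$ of $w$ has $d$ surviving yields $\wh\alpha(w)$. (Alternatively, and perhaps more simply, one adds a fresh letter $x$ with $x^2=1$ and no commutations, exactly as in \reflem{lem:x} for the general Coxeter case, and reads $\wh\alpha(w)$ off the support of $\sig_w(x)\in\Z^{\Sig\cup\{x\}}$; \refprop{prop:hugo} is the right-angled analogue of \reflem{lem:x} that justifies this.)

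The remaining, and only substantive, point is that the required matrix entries can be computed in logspace. The map $\sig_w$ is a product $A_1\cdots A_{|w|}$ of fixed integer matrices $A_i\in\GL(n,\Z)$ with entries in $\{0,\pm1,2\}$, so each entry of the product is bounded in absolute value by $c^{|w|}$ for a constant $c$, i.e.\ representable with $O(|w|)$ bits. Iterated integer matrix multiplication with polynomially bounded dimension and entries of polynomial bit-length is computable in logspace: compute each entry modulo the first $k=O(|w|)$ primes $p_1,\ldots,p_k$ with $\prod p_i$ exceeding the bound (the primes are of polynomial size and listable in logspace), which is just iterated multiplication and addition in the fixed-size fields $\mathbb{F}_{p_i}$, and then recover the binary representation from its Chinese remainder representation by \refthm{theorem hesse und co}. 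This is strictly easier than the proof of \reflem{lemma-main}: since $\sig$ is already integer-valued there is no need for the root-of-unity gap theorem (\refthm{thm:canny}) or for the algebraic-number approximation result (\refthm{thm-approx-alg}) — the sign of an exactly computed integer is read off directly. Hence the whole procedure — enumerate prime prefixes via \reflem{lemma-prim-prefixes}, compute the relevant integer coordinate of each $\sig_u(d)$ by the modular method above, test its sign, and output the surviving letters — runs in logspace, and correctness is exactly \refprop{prop:hugo} together with \reflem{lem:otto}. The main thing to be careful about is making the reduction from $w$ to the prime-prefix sign tests genuinely correct on non-reduced input; using the fresh-letter trick of \reflem{lem:x} side-steps this, since then $\sig_w(x)$ is computed from $w$ directly and \refprop{prop:hugo}, applied in $C(\Sig\cup\{x\}, I)$, identifies its support with $\wh\alpha(w)$.
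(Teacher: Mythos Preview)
Your ``alternative'' route---adjoin a fresh letter $x$ dependent on every letter of $\Sigma$, compute $\sig_w(x)$, and read $\wh\alpha(w)$ off its support via \refprop{prop:hugo}---is exactly the paper's proof. The paper's logspace argument for testing $\lambda_b \neq 0$ is even a bit simpler than yours: since only a zero/nonzero decision is needed (not the sign, because \refprop{prop:hugo} already guarantees nonnegativity), one can bypass \refthm{theorem hesse und co} entirely and just check whether $\lambda_b \equiv 0 \pmod m$ for all $m \leq |w|$; the lcm of these moduli exceeds $2^{|w|} \geq |\lambda_b|$, so this detects zero. Your primary route through \reflem{lemma-prim-prefixes} is unnecessary here and, as you yourself flag, would require extra care on non-reduced input---the paper avoids it by noting that $\sig_w = \sig_{\wh w}$ as linear maps, so one never needs to reduce $w$ first.
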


\begin{proof}
 Introduce a new letter $x$ which depends on all other letters from $\Sigma$. 
 We have $\sig_w(x)= \sig_{\wh w}(x) = \sum_{b} \lambda_b b$. As $\wh
 wx $ is a \Ct and prime, we have for all $b \in \Sigma$: 
 $$b \in
 \wh\alpha(w) \Longleftrightarrow b \in \alpha(\wh w x)
 \Longleftrightarrow \lambda_b \neq 0,
 $$ 
where the last equivalence follows from 
\refprop{prop:hugo}. Whether $\lambda_b \neq 0$ can be checked in
logspace, by computing $\lambda_b \;\mathrm{mod}\; m$ for all numbers 
$m \leq \abs w$, since the least common multiple of the first $n$ numbers is larger than 
$2^n$ (if $n \geq 7$) and the $\lambda_b$ are integers with 
$\abs{\lambda_b} \leq 2^{\abs{w}}$. See also \cite{lz77} for 
an analogous statement in the general context of linear groups. 
\qed
\end{proof}
The hypothesis in \refcor{cor:nixoderwas} being a \raCG is actually not necessary as we have seen in  \refthm{thm:allcoxoderwas}. It remains open whether this hypothesis can be removed
in the following theorem. 

\begin{theorem}\label{thm:lexshort}
Let $G$ be a fixed graph group or a fixed \raCG. 
Then we can calculate in logspace shortlex normal forms in $G$. 
\end{theorem}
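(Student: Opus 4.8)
The plan is to prove \refthm{thm:lexshort} in two stages. First, by the logspace reduction already described before the theorem statement, it suffices to compute shortlex normal forms in a \raCG $C(\Sig,I)$ in logspace: given a graph group element, we pass to $\phi(g) \in C(\Gam, I_\Gamma)$ via $\phi(a) = a\ov a$, compute the shortlex normal form there, and then collapse factors $a\ov a \mapsto a$ and $\ov a a \mapsto \ov a$; all of these steps are clearly logspace and compose (logspace is closed under composition). And for a \raCG it suffices, as noted in the text, to compute in logspace on input $u \in \Sig^*$ some word or trace $v$ with $u = v$ in $C(\Sig,I)$ and $\abs{v} = \Abs u$, since the lexicographic normal form of a trace is then easily computed in logspace.

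So the core task is: on input $u = a_1 \cdots a_n \in \Sig^*$, identify in logspace which letter-occurrences of $u$ \emph{survive} the reduction $u \RAS*{S_C} \wh u$, i.e.\ compute the surviving set of positions together with the induced dependence order, which is exactly (a representative of) the \Ct $\wh u$. My approach is to decide survival of each position one at a time, reusing the machinery of \refprop{prop:hugo} and \reflem{lem:otto} applied not to all of $u$ but to suitable prefixes. Concretely, for a position $i$ with label $a = a_i$, let $u_i = a_1 \cdots a_i$ be the prefix up to and including $i$. By \reflem{lem:otto}, the occurrence at $i$ is the one that gets cancelled in $\wh{u_i}$ exactly when $u_i$ has a factor $a v a$ (in the trace sense) with $\wh\alpha(v) \sse I(a)$ whose final $a$ is the occurrence at position $i$; more usefully, position $i$ survives in $\wh{u_i}$ iff the prime prefix of $u_i$ ending at $i$ — call it $p$, obtainable by \reflem{lemma-prim-prefixes} — is such that $\wh{p}$ still ends in $a$, equivalently (since $\wh p \, x$ is prime for a fresh globally-dependent letter $x$) iff, writing $p = p' a$, the letter $a$ occurs in $\wh{p'}$ \emph{dependently reachable-wise} only in a way that does not produce a cancellable $a\ldots a$. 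The clean way to phrase the test: position $i$ survives the reduction of the whole word $u$ iff it survives the reduction of every prime prefix of $u$ ending at a position $\geq i$ that is labelled $a$ and dependent-path-connected to $i$ — but in fact survival is monotone in a way that lets us just check, for each later position $j > i$ with $a_j = a$ and $i \prec j$ in $\DG(u)$, whether the trace $a_{i+1}\cdots a_{j-1}$ restricted to the dependence-interval between $i$ and $j$ has $\wh\alpha \sse I(a)$, and apply \refcor{cor:nixoderwas} to compute those intermediate \Ct-alphabets in logspace.

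The key steps, in order: (1) state the reduction from graph groups to \raCG{}s and from \raCG shortlex normal forms to computing the surviving subword, both logspace, composing them; (2) prove a combinatorial characterization — surviving/cancelled positions of $u$ under $S_C$ can be determined by iterating \reflem{lem:otto} and peeling off the earliest cancellation, exactly as in that lemma's proof, so that the pairing of cancelled occurrences is a well-defined matching read off from $\DG(u)$; (3) show this matching can be computed in logspace: process positions left to right, for the current position $i$ labelled $a$ decide whether it is already matched; if not, scan forward for the least position $j > i$ with $a_j = a$, $i \prec j$, such that $\wh\alpha$ of the relevant in-between trace lies in $I(a)$ and $j$ is itself unmatched — using \refcor{cor:nixoderwas} as a logspace subroutine for the $\wh\alpha$ tests — and match $i$ with $j$; (4) output the subword of unmatched positions, equipped with the dependence relation inherited from $\DG(u)$; (5) compute the lexicographic (trace) normal form of this \Ct in logspace and, in the graph-group case, undo the $\phi$-encoding.

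The main obstacle I anticipate is step (3): showing that the naive greedy left-to-right matching actually computes the \emph{correct} matching of cancelled occurrences, and that each test it performs is genuinely logspace. The subtlety is that whether $a_{i+1}\cdots a_{j-1}$ reduces to something with alphabet inside $I(a)$ depends on cancellations \emph{within} that infix, and those cancellations might themselves involve letters outside the infix if we are not careful about using the trace structure (only the restriction to vertices lying strictly $\prec$-between $i$ and $j$ matters, by the independence argument in the proof of \reflem{lem:otto}). One must argue that the ``first cancellation'' peeling in step (2) is compatible with left-to-right greedy choice — essentially a confluence/exchange argument: if $i$ is cancelled at all, it is cancelled with the least admissible $j$, and this choice does not block any other position from being correctly classified. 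This is where I would spend the bulk of the proof; the logspace bookkeeping (storing $O(1)$-many pointers, invoking \refcor{cor:nixoderwas} and \reflem{lemma-prim-prefixes} as subroutines, composing logspace transducers) is then routine.
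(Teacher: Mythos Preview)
Your high-level strategy is the same as the paper's: reduce graph groups to right-angled Coxeter groups via $\phi$, and for a \raCG compute a geodesic representative by repeatedly using \refcor{cor:nixoderwas} to test conditions of the shape $\wh\alpha(\cdots)\subseteq I(a)$. Where you diverge is in trying to compute, \emph{in one pass over all letters simultaneously}, a global matching of cancelled positions via a greedy least-$j$ rule. This creates a real gap. To decide whether position $i$ is ``already matched'' you must know whether some earlier $i'<i$ with $a_{i'}=a_i$ chose $i$ as its minimal partner \emph{and} whether that $i'$ was itself unmatched---which is the same question one level down. Unwinding this recursion has depth proportional to the number of $a$-occurrences, not $O(1)$, so the naive re-scan idea does not give a logspace procedure. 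You also owe a proof that the least-$j$ greedy matching is even well-defined and correct (you flag this, but do not resolve it); note for instance that the minimal admissible $j$ need not be the very next $a$-occurrence, as the example $abacaba$ with $(a,b)\in D$, $(a,c),(b,c)\in I$ shows (position $1$ first matches position $7$, since $\wh\alpha(b)\not\subseteq I(a)$ but $\wh\alpha(bacab)=\{c\}\subseteq I(a)$), so the combinatorics of the matching are not as tame as a simple stack pairing.

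The paper avoids all of this with one idea you are missing: it processes a \emph{single} letter $a$ at a time. For fixed $a$ it builds a logspace transducer that outputs a word equal to the input in $C(\Sigma,I)$, $a$-short, and preserving $b$-shortness for every $b$. The transducer scans left to right; at an $a$-position $i$ it searches for the \emph{maximal} $j>i$ with $a_j=a$ and $\wh\alpha(a_{i+1}\cdots a_{j-1})\subseteq I(a)$, and if it exists outputs the $a$-free projection of that block and jumps to stage $j+1$. Because of the jump, the only state carried is the single stage pointer; there is no ``already matched'' bookkeeping at all. Then one composes these $|\Sigma|$ transducers (a constant number) to obtain a word that is $a$-short for every $a$, i.e.\ the \Ct $\wh w$. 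The maximal-$j$ choice is what makes the single-pointer jump sound: since $a\notin\wh\alpha(a_{i+1}\cdots a_{j-1})$, all intermediate $a$'s in the block already cancel among themselves, so deleting $i$, $j$, and every intermediate $a$ is legitimate in one shot. If you want to salvage your approach, the cleanest fix is precisely this: drop the global matching, handle one letter per transducer with maximal-$j$ jumps, and compose.
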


\begin{proof}
As remarked earlier, 
it is enough to consider a \raCG $G = C(\Sig,I)$. 
Fix a letter $a \in \Sigma$. We first construct a logspace
transducer, which computes for an input trace $w \in M(\Sigma,I)$
a trace $u \in M(\Sigma,I)$ with the following properties:
(i) $u = w$ in $C(\Sigma,I)$, (ii) $u$ is $a$-short, and 
(iii) for all $b \in \Sigma$, if $w$ is $b$-short, then
also $u$ is $b$-short. 
Having such a logspace transducer for every $a \in \Sigma$,
we can compose all of them in an arbitrary order (note that
$|\Sigma|$ is a constant) to obtain a logspace transducer
which computes for a given input trace $w \in M(\Sigma,I)$ 
a trace $u$ such that $w = u$ in $C(\Sigma,I)$
and $u$ is $a$-short for all $a \in \Sigma$, i.e.,
$u \in \IRR(S_C)$. Thus $u = \wh w$. From $u$ we can compute easily in 
logspace the Hasse diagram of $\DG(u)$ and then the shortlex normal form.

So, let us fix a letter $a  \in \Sigma$ and an input trace $w= a_1
\cdots a_n$, where $a_1, \ldots, a_n \in \Sigma$.
We remove from left to right positions (or 
vertices) labeled by  
the letter $a$ which cancel and which therefore do not appear in $\wh
w$. We read $a_1 \cdots a_n$ from left to right.
In the $i$-th stage do the following: 
If $a_i \neq a$ output the letter $a_i$ and switch to the $(i+1)$-st stage.
If however 
$a_i = a$, then compute in logspace 
(using \refcor{cor:nixoderwas}) the maximal index $j>i$ (if it exists) such that
$a_j = a$ and $\wh \alp(a_{i+1} \cdots a_{j-1}) \sse I(a)$. 
If no such index $j$ exists, then append the letter $a_i$ to the
output tape and switch to the $(i+1)$-st stage. 
If $j$ exists, then append the word $a_{i+1} \cdots a_{j-1}$ to the
output tape, but omit all $a$'s. After that switch immediately to stage 
$j+1$. Here is a pseudo code description of the algorithm, where
$\pi_{\Sigma \setminus \{a\}} : \Sigma^* \to (\Sigma \setminus
\{a\})^*$ denotes the homomorphism that deletes all occurrences of $a$.

\medskip
\noindent
$i := 1$; \\
$w := 1$ \hfill (the content of the output tape
of the transducer) \\
{\bf while} $i \leq n$ {\bf do} \\
\hspace*{.5cm} {\bf if} $a_i \neq a$ {\bf then} \\
\hspace*{1cm} $w := wa_i$; \\
\hspace*{1cm} $i := i+1$ \\
\hspace*{.5cm} {\bf else} \\
\hspace*{1cm} $j := \text{undefined}$\\
\hspace*{1cm} {\bf for} $k=i+1$ {\bf to} $n$ {\bf do} \\
\hspace*{1.5cm} {\bf if} $a_k = a$ and 
$\wh \alp(a_{i+1} \cdots a_{k-1}) \sse I(a)$ {\bf then}\\
\hspace*{2cm} $j := k$\\
\hspace*{1.5cm} {\bf endif} \\
\hspace*{1cm} {\bf endfor}\\
\hspace*{1cm} {\bf if}  $j = \text{undefined}$ {\bf then}\\
\hspace*{1.5cm} $w := wa_i$; \\
\hspace*{1.5cm} $i := i+1$ \\
\hspace*{1cm} {\bf else} \\
\hspace*{1.5cm} $w := w\,\pi_{\Sigma \setminus \{a\}}(a_i \cdots a_{j-1})$; \\
\hspace*{1.5cm} $i := j+1$ \\
\hspace*{1cm} {\bf endif} \\
\hspace*{.5cm} {\bf endif}\\
{\bf endwhile}\\
{\bf return}(w)
\medskip

\noindent
Let $w_s$ be the content of the output tape at the
beginning of stage $s$, i.e., when the algorithm checks
the condition of the while-loop and variable $i$ has value $s$. 
(hence, $w_1 = 1$ and $w_{n+1}$ is the final output).
The invariant of the algorithm is that  
\begin{itemize}
\item $w_s = a_1 \cdots a_{s-1}$ in $C(\Sigma,I)$,
\item $w_s$ is $a$-short, and 
\item if $a_1 \cdots a_{s-1}$ is $b$-short, then also
$w_s$ is $b$-short.
\end{itemize}
The proof of this fact uses \reflem{lem:otto}. 
\qed
\end{proof}

\section{Free partially commutative inverse monoids}\label{sec:fp}

A monoid $M$ is \emph{inverse}, if
for every $x \in M$ there is  
$\inv{x} \in M$ 
with 
\begin{equation} \label{INV 1,2,3}
  x \inv{x} x  = x,  \quad
  \inv{x} x \inv{x} = \inv{x}, \text{ and } \quad
x\inv{x}\, y\inv{y} = y\inv{y}\, x\inv{x}. 
\end{equation}
The element $\ov x$ is uniquely defined by these properties and it is called
the \emph{inverse} of $x$. Thus, we may also use the notation 
$\ov x = x^{-1}$. 
It is easy to see that every idempotent element in an 
inverse monoid has the form $xx^{-1}$, and all these elements are  idempotent.
Using equations \eqref{INV 1,2,3} 
for all $x,y \in \Gam^*$  as defining relations 
we obtain 
the \emph{free inverse monoid }Ê$\FIM(\Sigma)$ which has been widely studied
in the literature.  More details on inverse monoids can be found in 
\cite{Law99}.

An \emph{inverse monoid over an independence alphabet $(\Sigma,I)$} is an
inverse monoid $M$ together with a mapping
$\varphi: \Sigma \to M$ such that
$\varphi(a)\varphi(b) = \varphi(b)\varphi(a)$ and
$\ov {\varphi(a)}\varphi(b) = \varphi(b)\ov{\varphi( a)}$
for all $(a,b) \in I$. We define the 
{\em free partially  commutative inverse monoid over} $(\Sigma,I)$
as the quotient monoid
$$\FIM(\Sigma,I) =
  \FIM(\Sigma)/\{ab=ba, \inv{a}b=b\inv{a} \mid (a,b)\in I\}.$$
It is an inverse  monoid over $(\Sigma,I)$.
Da~Costa has studied $\FIM(\Sigma,I)$ in his PhD~thesis \cite{vel03}.
He proved  that $\FIM(\Sigma,I)$ has
a decidable word problem, but he did not show any complexity bound. 
The first upper complexity bound for the word problem is due to
\cite{DiekertLohreyMiller08}, where it was shown to be solvable 
in time $O(n\log(n))$ on a RAM.
The aim of this section is to show that the space complexity of 
the word problem of $\FIM(\Sigma,I)$ is very low, too. 

\begin{theorem} \label{space}
The word problem of $\FIM(\Sigma,I)$
can be solved in logspace.
\end{theorem}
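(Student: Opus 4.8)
The goal is to reduce the word problem of $\FIM(\Sigma,I)$ to the computation of shortlex normal forms in a graph group, which is possible in logspace by \refthm{thm:lexshort}. The classical tool here is the description of elements of $\FIM(\Sigma,I)$ by \emph{Munn-trees} (also called birooted word graphs), generalized to the partially commutative setting by Da~Costa \cite{vel03} and exploited in \cite{DiekertLohreyMiller08}. An element of $\FIM(\Sigma,I)$ is represented by a pair $(P, q)$, where $P$ is a finite, prefix-closed subset of the graph group $G(\Sigma,I)$ (closed under the natural ``subtrace'' order coming from $\IRR(S_G)$), and $q \in P$ is the distinguished endpoint. Two words $u, v \in \Gamma^*$ represent the same element of $\FIM(\Sigma,I)$ if and only if they induce the same pair $(P_u, q_u) = (P_v, q_v)$. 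So the word problem amounts to: given $u$ and $v$, decide whether $q_u = q_v$ in $G(\Sigma,I)$ and whether $P_u = P_v$ as subsets of $G(\Sigma,I)$.

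\textbf{Key steps.} First I would recall precisely the combinatorial characterization from \cite{DiekertLohreyMiller08}: for a word $u = b_1 \cdots b_n \in \Gamma^*$, the endpoint $q_u$ is just the image of $u$ in $G(\Sigma,I)$, i.e. the normal form $\widehat{b_1 \cdots b_n} \in \IRR(S_G)$; and the set $P_u$ is the set of group elements $\widehat{b_1 \cdots b_i}$ for $0 \le i \le n$, together with all their ``prefixes'' in the prefix order on traces in $\IRR(S_G)$ (equivalently, the downward closure with respect to the prefix order of the set of these $n+1$ elements). Checking $q_u = q_v$ is then immediate: compute the shortlex normal forms of $u$ and $v$ in $G(\Sigma,I)$ by \refthm{thm:lexshort} and compare them letter by letter — this is logspace since logspace functions compose and equality testing of logspace-computable outputs is logspace. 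For $P_u = P_v$, the plan is: a trace $t$ lies in $P_u$ iff $t$ (in shortlex normal form) is a prefix of some $\widehat{b_1 \cdots b_i}$ in the trace-prefix order. Since each $\widehat{b_1 \cdots b_i}$ has length at most $n$ and there are only $n+1$ of them, and since the prime prefixes of a trace can be listed in logspace (\reflem{lemma-prim-prefixes}), one can enumerate in logspace a generating list for $P_u$: for each $i$, compute $\widehat{b_1 \cdots b_i}$ via \refthm{thm:lexshort}, then list its prefixes. To test $P_u \subseteq P_v$ one would, for each candidate prefix $t$ arising from $u$, search whether $t$ occurs as a prefix among the (polynomially many) prefixes arising from $v$; comparing two shortlex normal forms for equality is logspace, and a logspace machine can re-run these subcomputations on demand rather than storing them, so the overall nested loop stays in logspace. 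Symmetrically one tests $P_v \subseteq P_u$. The element represented by $u$ equals the one represented by $v$ iff $q_u = q_v$ and $P_u = P_v$, which we have now decided in logspace.

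\textbf{Main obstacle.} The delicate point is \emph{not} the graph-group computation (that is handed to us by \refthm{thm:lexshort}), but making sure the Munn-tree description is faithful in the partially commutative case and that membership ``$t \in P_u$'' has a characterization purely in terms of trace prefixes of the group elements $\widehat{b_1 \cdots b_i}$ — i.e. that one really does get a prefix-closed subset of $\IRR(S_G)$ and that equality in $\FIM(\Sigma,I)$ reduces exactly to equality of the pair $(P_u, q_u)$. This is precisely the content of Da~Costa's theorem \cite{vel03} as sharpened in \cite{DiekertLohreyMiller08}, so I would cite it rather than reprove it; the only thing needing care is confirming that the data $(P_u, q_u)$ is extractable in logspace, for which the crucial ingredients are the logspace computation of shortlex normal forms (\refthm{thm:lexshort}) and the logspace listing of prime prefixes (\reflem{lemma-prim-prefixes}), combined with closure of logspace under composition. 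A secondary bookkeeping point is that $P_u$ can have polynomially many elements, so one must avoid writing it down in full: the standard trick is that a logspace transducer can \emph{recompute} each element of $P_u$ (and of $P_v$) whenever needed inside a doubly-nested loop, keeping only indices $i$, $j$ and a constant amount of extra data on the work tape. This gives the desired logspace algorithm for the word problem.
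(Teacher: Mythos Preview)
Your overall strategy matches the paper's proof exactly: invoke the characterization from \cite{DiekertLohreyMiller08}, compute shortlex normal forms of all word-prefixes via \refthm{thm:lexshort}, list their prime prefixes via \reflem{lemma-prim-prefixes}, and compare. The paper phrases the invariant as
\[
M(u)=\bigcup_{i=1}^{n}\{\,p \mid p \text{ is a prime prefix of }\widehat{u_i}\,\}
\]
and checks $M(u)=M(v)$ together with $u=v$ in $G(\Sigma,I)$.

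There is, however, a genuine slip in your statement of the invariant. You take $P_u$ to be the downward closure (in the trace-prefix order) of $\{\widehat{u_i}:0\le i\le n\}$ and assert that $u=v$ in $\FIM(\Sigma,I)$ iff $(P_u,q_u)=(P_v,q_v)$. This is false. With $(a,b)\in I$ consider $u=a\ov a\,b$ and $v=a\,b\,\ov a$. Since $\ov a b=b\ov a$ in $\FIM(\Sigma,I)$ we have $u=v$. Yet the visited group elements are $\{1,a,b\}$ for $u$ and $\{1,a,ab,b\}$ for $v$, whose downward closures are $\{1,a,b\}$ and $\{1,a,b,ab\}$ respectively; so $P_u\neq P_v$. (This example also shows that the full prefix closure need not be polynomial in size: for a word over $k$ pairwise independent letters it can have $2^k$ elements.) The correct invariant from \cite{DiekertLohreyMiller08} is precisely the set $M(u)$ of \emph{prime} prefixes, and in the example indeed $M(u)=M(v)=\{a,b\}$.

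The good news is that your \emph{algorithm} --- enumerate the prime prefixes of each $\widehat{u_i}$ and test each one for equality against the prime prefixes coming from $v$ --- computes exactly $M(u)$ and compares it to $M(v)$, which is the right test. So the procedure is correct; only the surrounding justification needs to be rewritten to refer to $M(u)$ rather than the (non-invariant) downward closure $P_u$.
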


\begin{proof}
For a word $u = a_1 \cdots a_n$ ($a_1, \ldots, a_n \in \Gamma$)
let $u_i \in M(\Gamma,I_\Gamma)$ ($1 \leq i \leq n$) be the trace 
represented by the prefix $a_1 \cdots a_i$ and define the 
following subset of the trace monoid $M(\Gamma,I_\Gamma)$.
\begin{equation} \label{M(u)}
M(u) = \bigcup_{i=1}^n 
\{ p \mid p \text{ is a prime prefix of }
\wh{u_i} \} \subseteq M(\Gamma,I_\Gamma) .
\end{equation}
(This set is a partial commutative  analogue of the 
classical notion of \emph{Munn tree} introduced in \cite{Munn:74}.)
It is shown in \cite[Sect. 3]{DiekertLohreyMiller08} that  for all
words $u,v \in \Gamma^*$, 
$u=v$ in
$\FIM(\Sigma,I)$ if and only if 
\begin{enumerate}[(a)]
\item $u = v$ in the graph group
$G(\Sigma,I)$ and 
\item $M(u) = M(v)$.
\end{enumerate}
Since $G(\Sigma,I)$ is linear, condition (a) can be checked in logspace \cite{lz77,Sim79}. 
For (b), it suffices to show that the set $M(u)$ from \eqref{M(u)}
can be computed in logspace from the word $u$ (then $M(u) = M(v)$
can be checked in logspace, since
the word problem for the trace monoid $M(\Gamma,I_\Gamma)$
belongs to uniform $\mathsf{TC}^0$ \cite{AlGa91}
and hence to logspace).
By  \refthm{thm:lexshort} we can compute in logspace a list
of all normal forms $\wh{u_i}$ ($1 \leq i \leq n$), where
$u_i$ is the prefix of $u$ of length $i$.
By composing this logspace transducer with a logspace transducer
for computing prime prefixes (see \reflem{lemma-prim-prefixes}), we obtain a logspace
transducer for computing the set $M(u)$.
\qed
\end{proof}

\section{Concluding remarks and open problems}

We have shown that shortlex normal forms can be computed in logspace
for graph  groups and right-angled Coxeter groups.
For general Coxeter groups, we are able to 
compute in logspace the length of the shortlex normal form
and the set of letters appearing in the shortlex normal form. 
For even Coxeter groups we can do better and enhance the general result since we can compute the Parikh-image of geodesics. 
An obvious open problem is, whether for every (even) Coxeter group
shortlex normal forms can be computed in logspace. We are tempted
to believe that this is indeed the case. A more general question is, whether
shortlex normal forms can be computed in logspace for automatic groups.
Here, we are more sceptical. It is not known whether the word problem 
of an arbitrary automatic group can be solved in logspace.  In \cite{Lo05ijfcs},
an automatic {\em monoid} with a $\mathsf{P}$-complete word problem
was constructed. In fact, it is even open, whether the word problem for 
a hyperbolic group belongs to logspace. The best current upper bound
is {\sf LOGCFL} \cite{Lo05ijfcs}. So, one might first try to lower this bound e.g.
to  {\sf LOGDCFL} (the class of all languages that are logspace-reducible to 
a deterministic context-free language).
M.~Kapovich pointed out that there are non-linear hyperbolic 
groups. Hence the fact that linear groups have logspace
word problems (\cite{lz77,Sim79}) does not help here.


\newcommand{\Ju}{Ju}\newcommand{\Ph}{Ph}\newcommand{\Th}{Th}\newcommand{\Ch}{C%
h}\newcommand{\Yu}{Yu}\newcommand{\Zh}{Zh}

\end{document}